\UseRawInputEncoding
\documentclass[pdflatex,sn-mathphys-num]{sn-jnl}

\UseRawInputEncoding
\usepackage{graphicx}%
\usepackage{multirow}%
\usepackage{amsmath,amssymb,amsfonts}%
\usepackage{amsthm}%
\usepackage{mathrsfs}%
\usepackage[title]{appendix}%
\usepackage{xcolor}%
\usepackage{textcomp}%
\usepackage{manyfoot}%
\usepackage{booktabs}%
\usepackage{algorithm}%
\usepackage{algorithmicx}%
\usepackage{algpseudocode}%
\usepackage{listings}%
\usepackage{inputenc}

\theoremstyle{thmstyleone}%
\newtheorem{theorem}{Theorem}
\newtheorem{proposition}[theorem]{Proposition}%

\theoremstyle{thmstyletwo}%
\newtheorem{example}{Example}%
\newtheorem{remark}{Remark}%
\newtheorem{lemma}{Lemma}
\newtheorem{property}{Property}
\newtheorem{corollary}{Corollary}
\theoremstyle{thmstylethree}%
\newtheorem{definition}{Definition}%

\begin{document}

\title[A generalization of the   $(\alpha, \beta, \gamma)$  von Neumann-Jordan type constant in Banach spaces]{A generalization of the  $(\alpha, \beta, \gamma)$ von Neumann-Jordan type constant in Banach spaces}


\author[1]{\fnm{Shuoyi} \sur{Wan}}\email{170724067@stu.aqnu.edu.cn}

\author[1]{\fnm{Pingru} \sur{Duan}}\email{060125101@stu.aqnu.edu.cn}

\author[1]{\fnm{Zhiyao} \sur{Fang}}\email{fzy73@stu.aqnu.edu.cn}

\author[1]{\fnm{Yuxin} \sur{Wang}}\email{y24060028@stu.aqnu.edu.cn}

\author[1]{\fnm{Qi} \sur{Liu$^*$}}\email{liuq67@aqnu.edu.cn}

\affil*[1]{\orgdiv{School of Mathematics and Statistics}, \orgname{Anqing Normal University}, \orgaddress{ \city{Anqing}, \postcode{246133}, \state{Anhui}, \country{China}}}


\abstract{In this paper,motivated by the pioneering work of Moslehian and Rassias, we introduce a novel generalized von Neumann-Jordan type constant in Banach spaces, a constant that notably encompasses a range of classical geometric constants. First, we establish the bounds of this constant within Banach spaces. Next, we derive its relationships with other geometric constants. Furthermore, we characterize the uniformly non-square property and provide a sufficient condition for the uniform normal structure by leveraging this newly proposed constant.}

\keywords{Banach spaces, geometric constant, von Neumann-Jordan type constant, normal structure}


\pacs[MSC Classification]{46B20}

\maketitle

\section{Introduction}

Geometric constants have drawn growing interest and extensive investigation among mathematicians, owing to their distinctive structural properties and adaptability in addressing a wide range of mathematical challenges. These include the study of isomorphisms of $C_0(K, X)$ spaces \cite{09}, extensions of the Banach-Stone theorem \cite{10}, analysis of the uniform approximation property \cite{13}, and Tingley's problem \cite{14}. For further applications of geometric constants in other mathematical disciplines, we refer the reader to \cite{11,12}.

Let $X=(X,\|\cdot\|)$ be a Banach space. In this article, we assume that the Banach space under consideration is nontrivial, i.e., $\operatorname{dim} X \geq 2$. The geometric structure of a Banach space X is fundamentally characterized by two central objects: its closed unit ball $B_X = \{x \in X : \|x\| \leq 1\}$ and its unit sphere $S_X = \{x \in X : \|x\| = 1\}$.

Subsequently, we recall several key definitions and preliminary results.

\begin{definition}
	\cite{20}  A Banach space $X$ is said to be uniformly non-square if there exists a constant $\delta > 0$ such that for all $x, y \in S_X$ denotes the unit sphere of $X$, the following holds:
	\[\left\| \frac{1}{2}(x + y) \right\| < 1 - \delta \quad \text{or} \quad \left\| \frac{1}{2}(x - y) \right\| < 1 - \delta.\]
\end{definition}

\begin{definition}
	\cite{29,30}  A Banach space  $X$  is said to have normal structure if for every bounded closed convex subset $K$  of  $X$  with more than one point, there exists some  $x \in K$  such that
	\[\sup\{\|x - y\|: y \in K\} < \text{diam}\, K.\]
\end{definition}

\begin{definition}
	\cite{29,30}  $X$  is said to have uniform normal structure if there exists  $0 < c < 1$  such that, for any such subset  $K$ , there is an  $x \in K$  satisfying
	\[\sup\{\|x - y\|: y \in K\} < c \cdot \text{diam}\, K.\]
\end{definition}

 In reference \cite{25},Moslehian and Rassias established a new equivalent characterization of inner product spaces.
 \begin{theorem}\cite{25}
 	Let $(X, \|\cdot\|)$ be a normed space. Then $X$ is an inner product space if and only if for all nonnegative real numbers $\alpha, \beta$ and all $x, y \in X$,
 	
 	$$\|\alpha x + \beta y\|^2 + \|\beta x - \alpha y\|^2 = (\alpha^2 + \beta^2)\bigl(\|x\|^2 + \|y\|^2\bigr).$$
 \end{theorem}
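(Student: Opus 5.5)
The forward direction is a direct computation. If the norm comes from an inner product $\langle\cdot,\cdot\rangle$, I will expand both squared norms:
\[\|\alpha x+\beta y\|^2=\alpha^2\|x\|^2+2\alpha\beta\langle x,y\rangle+\beta^2\|y\|^2,\qquad \|\beta x-\alpha y\|^2=\beta^2\|x\|^2-2\alpha\beta\langle x,y\rangle+\alpha^2\|y\|^2 .\]
Adding them, the cross terms cancel. This leaves $(\alpha^2+\beta^2)(\|x\|^2+\|y\|^2)$ for all real $\alpha,\beta$, and in particular for nonnegative ones. In the complex case the same holds with $2\alpha\beta\,\mathrm{Re}\langle x,y\rangle$, since $\alpha,\beta$ are real.

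For the converse I will specialize the parameters so that the identity collapses to the parallelogram law. Then I will invoke the classical Jordan--von Neumann theorem: a normed space whose norm satisfies $\|x+y\|^2+\|x-y\|^2=2(\|x\|^2+\|y\|^2)$ for all $x,y$ is an inner product space, with the inner product recovered by the polarization identity. The natural choice is $\alpha=\beta=1$. The hypothesis then reads
\[\|x+y\|^2+\|x-y\|^2=2\bigl(\|x\|^2+\|y\|^2\bigr)\quad\text{for all }x,y\in X,\]
which is exactly the parallelogram law. No sign issue arises from the restriction $\alpha,\beta\ge 0$, because $y$ ranges over all of $X$. In particular, replacing $y$ by $-y$ shows the identity is symmetric in the sign of the second term.

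The only substantive step is therefore the Jordan--von Neumann characterization, which I take as known. If a self-contained argument were required, the main obstacle would be verifying additivity and homogeneity of the polarization form $\langle x,y\rangle=\frac14(\|x+y\|^2-\|x-y\|^2)$. I would do this in the standard order: first prove $\langle x+z,y\rangle=\langle x,y\rangle+\langle z,y\rangle$ by applying the parallelogram law to suitable pairs. From additivity I would obtain rational homogeneity, then real homogeneity by continuity of the norm. In the complex case I would then add the imaginary part $\frac{i}{4}(\|x+iy\|^2-\|x-iy\|^2)$.
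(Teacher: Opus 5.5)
The paper does not prove this theorem; it is quoted from Moslehian and Rassias as background, so there is no internal argument to compare yours against.

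Your proof is correct. The forward direction is the expansion you give, with $\mathrm{Re}\langle x,y\rangle$ in the complex case; this works because $\alpha,\beta$ are real. For the converse, the single admissible instance $\alpha=\beta=1$ of the hypothesis is literally the parallelogram law, and the Jordan--von Neumann theorem then finishes the argument.

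Two small remarks. First, your argument shows that the hypothesis as quoted (all nonnegative $\alpha,\beta$) is far stronger than the converse needs. Second, your comment about replacing $y$ by $-y$ is unnecessary, since $\alpha=\beta=1$ is already nonnegative.
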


Let us briefly outline the content about the filter.

\begin{definition}\cite{38,39}
We define an ultrafilter $\mathcal{U}$ on an index set  $I$. Let  $\{x_i\}_{i \in I}$  be a subset of a Hausdorff topological space $X$; if  $\{x_i\}_{i \in I}$  converges to  $x$  with respect to  $\mathcal{U}$, we denote this as $\lim_{\mathcal{U}} x_i = x$. Consider a family  $\{X_i\}_{i \in I}$ of Banach spaces. The ultraproduct of $\{X_i\}_{i \in I}$ is the quotient space  $l_\infty(I, X_i)/N_{\mathcal{U}}$, endowed with the quotient norm. Here,

\[l_\infty(I, X_i) = \left\{ (x_i) \mid \|(x_i)\| = \sup_{i \in I} \|x_i\| < \infty \right\},\]

and
\[N_{\mathcal{U}} = \left\{ (x_i) \in l_\infty(I, X_i) \mid \lim_{\mathcal{U}} \|x_i\| = 0 \right\}.\]
\end{definition}

The definition of the James constant is presented below
\[J(X) = \sup \left\{ \min\left\{ \|x + y\|, \|x - y\| \right\} : x, y \in S_X \right\}.\]
For specific results of this constants, please refer to reference \cite{06,07,08}.

The von Neumann-Jordan constant has a long-standing and distinguished history, tracing its origins back to Clarkson \cite{01}, who noted the von Neumann-Jordan as the smallest $C$ which is satisfied the following inequality,
\[\frac{1}{C} \leq \frac{\|x + y\|^2 + \|x - y\|^2}{2(\|x\|^2 + \|y\|^2)} \leq C\]
for all \( x, y \in X \) where \(  (x, y) \neq(0,0) \).\\
For more details, please refer to reference \cite{42}.

Furthermore, the research results presented in reference \cite{01} indicate that this inequality is equivalent to the definition of the von Neumann-Jordan constant, which is given by
	\[C_{\mathrm{NJ}}(X)=\sup \bigg\{\frac{\|x+y\|^{2}+\|x-y\|^{2}}{2\left(\|x\|^{2}+\|y\|^{2}\right)}: x, y \in X, (x, y) \neq(0,0)\bigg\}.\]
An equivalent form of this constant is

\[C_{\mathrm{NJ}}(X) = \sup\left\{ \frac{\|x+y\|^2 + \|x-y\|^2}{4} : x,y \in S_X \right\}.\]

We compile several properties of these constants (refer to \cite{22,23,24} ):
\begin{property}
\begin{enumerate}
	\item[(i)] $\sqrt{2} \leq J(X) \leq 2$  and  $1 \leq C_{\mathrm{NJ}}(X) \leq 2 .$
	\item[(ii)]$C_{\mathrm{NJ}}(X) = C_{\mathrm{NJ}}(X^*) .$
	\item[(iii)] $X$  is uniformly non-square if and only if  $J(X) < 2$  if and only if $C_{\mathrm{NJ}}(X) < 2 .$
	\item[(iv)]  $C_{\mathrm{NJ}}(X) \leq J(X) .$
\end{enumerate}

\end{property}
In recent years, a large number of papers on $p-th$ geometric constants have emerged, which have made meaningful generalizations of classical geometric constants by means of the $p-th$ power and parameterization.

Building on the extension to the \( p \)-th power case, Cui et al. \cite{03} introduced a novel geometric constant \( C_{\mathrm{NJ}}^{p}(X) \), referred to as the generalized von Neumann-Jordan type constant. It is defined as follows:
\begin{definition}
\[
C_{\mathrm{NJ}}^{p}(X)=\sup\left\{ \frac{\|x + y\|^p+\|x - y\|^p}{2^{p - 1}\left(\|x\|^p+\|y\|^p\right)} : x,y\in X,(x,y)\neq(0,0) \right\}
\]
where \( 1\leq p<\infty \). 
\end{definition}
It is worth mentioning that the $p-th$ geometric constant plays an important role in addressing problems related to Banach spaces, such as when dealing with the roundness properties of Banach spaces.\cite{41}

In  new research, Ni et al. \cite{02} introduced a constant with a skew relationship, defined
by 
\[C_{\mathrm{NJ}}^p(\alpha, \beta, X)=\sup\left\{\frac{\left\|\alpha x+\beta y\right\|^p+\|\beta x-\alpha y\|^p}{2^{p-2} \left(\alpha^p+\beta^p\right)\left(\|x\|^p+\|y\|^p\right)}: x, y \in X,(x, y)\neq(0,0)\right\}\]
for $\alpha, \beta > 0, p\geq 1$.	

Then, Wang et al. \cite{28} have defined 
\[\widetilde{C}_{\mathrm{NJ}}^p(\alpha, \beta, X) = \sup\left\{ \frac{\|\alpha x + \beta y\|^{p} + \|\beta x - \alpha y\|^{p}}{2^{p - 1}\left(\alpha^{p} + \beta^{p}\right)} : x, y \in S_{X} \right\}\]
for $\alpha, \beta > 0, p\geq 1$.	

Related to three parameters, Wan et al.\cite{18} have defined 
$$
\begin{aligned}
	C_{\mathrm{NJ}}(\alpha,\beta,\gamma,X)=\sup\left\{\frac{\alpha\|\beta x+\gamma y\|^2+\beta\|\gamma x-\alpha y\|^2}{(\alpha\beta+\gamma^2)(\beta\|x\|^2+\alpha\|y\|^2)}:x,y\in X, (x,y)\neq(0,0)\right\}
\end{aligned}$$ 
for $\alpha,\beta,\gamma>0$.

In 2002, Zb\u{a}ganu \cite{31}  introduce the following constant
\[C_{\mathrm{Z}} = \sup\left\{ \frac{\|x + y\|\|x - y\|}{\|x\|^2 + \|y\|^2} : x, y \in X, (x, y) \neq (0, 0) \right\}.\]

Additionally, the Zb\u{a}ganu constant can also be generalized to the $p-th$ order and in terms of parameters in the following ways:
\begin{enumerate}
	\item[(i)] For \( p \geq 1 \),
	\[ C_{\mathrm{Z}}^{(p)}(X) = \sup\left\{ \frac{\|x+y\|^{\frac{p}{2}} \|x-y\|^{\frac{p}{2}}}{2^{p-2}\left(\|x\|^p + \|y\|^p\right)} : x, y \in X, (x, y) \neq (0, 0) \right\}. \]
	\item[(ii)] For \( \alpha, \beta > 0 \) and \( p \geq 1 \),
	\[C_{\mathrm{Z}}^{(p)}(\alpha, \beta, X) = \sup\left\{ \frac{\|\alpha x + \beta y\|^{\frac{p}{2}} \|\beta x - \alpha y\|^{\frac{p}{2}}}{2^{p - 3}(\alpha^p + \beta^p)\left(\|x\|^p + \|y\|^p\right)} : x, y \in X, (x, y) \neq (0, 0) \right\}.\]
\end{enumerate}
For more details about the above constants, please refer the readers to \cite{08,26,27}.

By observing the constants in the aforementioned generalized forms, this article introduces a  generalized von Neumann-Jordan type constant $C_{\mathrm{NJ}}^p(\alpha, \beta, \gamma, X)$. The arrangement of this article is as follows:

In the second part, we will provid the definition of the new constant $C_{\mathrm{NJ}}^p(\alpha, \beta, \gamma, X)$, and its bounds.

In the third part, we shall establish the relationships between the constant $C_{\mathrm{NJ}}^p(\alpha, \beta, \gamma, X)$ and other constants, including $\delta_X(\varepsilon)$, $J(X)$, and $C_{\mathrm{NJ}}(X)$.

In the forth part, we will derive a sufficient condition for uniform normal structure with respect to the constant $C_{\mathrm{NJ}}^p(\alpha, \beta, \gamma, X)$.

\section{The constant $C_{\mathrm{NJ}}^p(\alpha, \beta, \gamma, X)$}
We introduce a new constant related to three parameters of the $p-th$ order. We begin by introducing the following key definition:

\begin{definition}
	For $\alpha,\beta,\gamma >0$ and $p \geq 1$, we define
	\[
	C_{\mathrm{NJ}}^p(\alpha, \beta, \gamma, X)=\sup \left\{\frac{\alpha^{\frac{p}{2}}\|\beta x+\gamma y\|^p+\beta^{\frac{p}{2}}\|\gamma x-\alpha y\|^p}{2^{p-2}\left[(\alpha \beta)^{\frac{p}{2}}+\gamma^p\right]\left(\beta^{\frac{p}{2}}\|x\|^p+\alpha^{\frac{p}{2}}\|y\|^p\right)}: x, y \in X,(x, y) \neq(0,0)\right\} 
	\]
\end{definition}

\begin{remark}Specially,\\
	(i) If  $p=2$, then, 
	\[
	\begin{aligned}
		C_{\mathrm{NJ}}^2(\alpha, \beta, \gamma, X)= &\sup \left\{\frac{\alpha\|\beta x+\gamma y\|^2+\beta\|\gamma x-\alpha y\|^2}{(\alpha \beta+\gamma^2)\left(\beta\|x\|^2+\alpha\|y\|^2\right)}: x, y \in X,(x, y) \neq(0,0)\right\} \\
		=&C_{\mathrm{NJ}}(\alpha, \beta, \gamma, X).
	\end{aligned}
	\]
	(ii) 
	\[
	\begin{aligned}
		C_{\mathrm{NJ}}^p(\alpha, \alpha, \beta, X)=&\sup \left\{\frac{\alpha^{\frac{p}{2}}\left\| \alpha x+\beta y\right\|^p+\alpha^{\frac{p}{2}}\|\beta x-\alpha y\|^p}{2^{p-2}\alpha^{\frac{p}{2}} \left(\alpha^p+\beta^p\right) \left(\|x\|^p+\|y\|^p\right)}: x, y \in X,(x, y) \neq(0,0)\right\}\\
		=&\sup \left\{\frac{\left\| \alpha x+\beta y\right\|^p+\|\beta x-\alpha y\|^p}{2^{p-2} \left(\alpha^p+\beta^p\right) \left(\|x\|^p+\|y\|^p\right)}: x, y \in X,(x, y) \neq(0,0)\right\}\\
		=&C_{\mathrm{NJ}}^p(\alpha, \beta, X) .
	\end{aligned}
	\]
	(iii) If $\alpha=\beta=\gamma$, then,
	\[
	\begin{aligned}
		C_{\mathrm{NJ}}^p(\alpha, \alpha, \alpha, X)=&\sup \left\{\frac{\alpha^{\frac{p}{2}}\|\alpha x+\alpha y\|^p+\alpha^{\frac{p}{2}}\|\alpha x-\alpha y\|^p}{2^{p-2}\left(\alpha^p+\alpha^p\right)\left(\alpha^{\frac{p}{2}}\|x\|^p+\alpha^{\frac{p}{2}}\|y\|^p\right)}: x, y \in X,(x, y) \neq(0,0)\right\}\\
		=&\sup \left\{\frac{\|x+y\|^p+\|x-y\|^p}{2^{p-1}\left(\|x\|^p+\|y\|^p\right)}: x, y \in X,(x, y) \neq(0,0)\right\}\\
		=&C_{\mathrm{NJ}}^p(X).
	\end{aligned}
	\]
	(iv)If $\alpha=\beta=\gamma=1$, $p=2$, then,
	\[
	C_{\mathrm{NJ}}^2(1, 1, 1, X)=\sup \left\{\frac{\|x+y\|^2+\|x- y\|^2}{2\left(\|x\|^2+\|y\|^2\right)}: x, y \in X,(x, y) \neq(0,0)\right\}=C_{\mathrm{NJ}}(X).
	\]
\end{remark}

Next, we discuss about the lower and upper bounds of the constant $C_{\mathrm{NJ}}^p(\alpha, \beta, \gamma, X)$.

\begin{lemma}\label{le1}
	Let \( x, y > 0 \), \( p \geq 1\), then \( (x + y)^p \leq 2^{p - 1}(x^p + y^p) \).
\end{lemma}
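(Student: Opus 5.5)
The inequality $(x+y)^p \leq 2^{p-1}(x^p+y^p)$ for $x,y>0$ and $p\ge 1$ is a direct consequence of the convexity of the power function. The plan is to use the function $f(t)=t^p$ on $(0,\infty)$. For $p\ge 1$, its second derivative is $f''(t)=p(p-1)t^{p-2}\ge 0$. For $p=1$ the function is linear, so convex trivially. Hence $f$ is convex on $(0,\infty)$.

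Applying convexity at the midpoint of $x$ and $y$ gives
\[
\left(\frac{x+y}{2}\right)^p = f\!\left(\frac{x}{2}+\frac{y}{2}\right) \leq \frac{1}{2}f(x)+\frac{1}{2}f(y) = \frac{x^p+y^p}{2}.
\]
Multiplying both sides by $2^p$ yields $(x+y)^p \leq 2^{p-1}(x^p+y^p)$, which is the claim. For $p=1$ the inequality is an equality, consistent with the linear case.

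An alternative route, if one prefers to avoid calculus, is H\"older's inequality with exponents $p$ and $p/(p-1)$ applied to the vectors $(1,1)$ and $(x,y)$. This gives $x+y \le 2^{(p-1)/p}(x^p+y^p)^{1/p}$, and raising both sides to the $p$-th power gives the same bound.

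There is no real obstacle here. The only point needing care is the degenerate case $p=1$, where the H\"older exponent $p/(p-1)$ is undefined; the convexity argument handles it uniformly, so that is the version I would write.
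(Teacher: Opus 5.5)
Your argument is correct. Convexity of $t\mapsto t^p$ for $p\ge 1$ and Jensen at the midpoint give $\left(\frac{x+y}{2}\right)^p\le\frac{x^p+y^p}{2}$, which becomes exactly the claim after multiplying by $2^p$. The paper states this lemma without proof, treating it as standard, so there is no argument of its own to compare against.

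One small advantage of your convexity version is that it works verbatim for $x,y\ge 0$, using convexity on $[0,\infty)$. The stated lemma assumes $x,y>0$, but the paper applies it in Proposition~\ref{p1} where $\|x\|$ or $\|y\|$ may be $0$, so your version covers that use.
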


\begin{proposition}\label{p1}
	Suppose that \(X\) is a Banach space, then,
	\[
	\frac{1}{2^{p - 2}}\leq C_{\mathrm{NJ}}^p(\alpha, \beta, \gamma, X) \leq 2.
	\]
\end{proposition}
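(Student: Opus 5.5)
The plan is to prove the two inequalities separately: the lower bound by testing one explicit pair $(x,y)$, and the upper bound by combining the triangle inequality with Lemma \ref{le1}. Throughout, write $s=(\alpha\beta)^{1/2}$, so that $(\alpha\beta)^{\frac p2}+\gamma^p=s^p+\gamma^p$.

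\emph{Lower bound.} Take any $x\neq 0$ and $y=0$. Then $\|\beta x+\gamma y\|=\beta\|x\|$ and $\|\gamma x-\alpha y\|=\gamma\|x\|$, so the numerator is
\[
\alpha^{\frac p2}\beta^p\|x\|^p+\beta^{\frac p2}\gamma^p\|x\|^p=\beta^{\frac p2}\|x\|^p\bigl(s^p+\gamma^p\bigr).
\]
The denominator is $2^{p-2}(s^p+\gamma^p)\beta^{\frac p2}\|x\|^p$. The quotient is therefore exactly $1/2^{p-2}$, and the supremum is at least this value.

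\emph{Upper bound.} Fix $(x,y)\neq(0,0)$ and put $a=\beta^{1/2}\|x\|$ and $b=\alpha^{1/2}\|y\|$, so the denominator equals $2^{p-2}(s^p+\gamma^p)(a^p+b^p)$. By the triangle inequality,
\[
\alpha^{\frac12}\|\beta x+\gamma y\|\le \alpha^{\frac12}\beta\|x\|+\alpha^{\frac12}\gamma\|y\| = s a+\gamma b,
\]
\[
\beta^{\frac12}\|\gamma x-\alpha y\|\le \beta^{\frac12}\gamma\|x\|+\beta^{\frac12}\alpha\|y\| = \gamma a+s b.
\]
Hence the numerator is at most $(sa+\gamma b)^p+(\gamma a+sb)^p$. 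Applying Lemma \ref{le1} to each term gives
\[
(sa+\gamma b)^p+(\gamma a+sb)^p\le 2^{p-1}\bigl(s^pa^p+\gamma^pb^p+\gamma^pa^p+s^pb^p\bigr)=2^{p-1}(s^p+\gamma^p)(a^p+b^p).
\]
Dividing by the denominator bounds the quotient by $2^{p-1}/2^{p-2}=2$, for every admissible pair, so the supremum is at most $2$.

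The only delicate point is that Lemma \ref{le1} is stated for strictly positive numbers, while $a$ or $b$ may vanish. When one of the two summands is $0$, the inequality $(u+v)^p\le 2^{p-1}(u^p+v^p)$ still holds trivially because $2^{p-1}\ge 1$. Apart from this, the main step is choosing the substitution $a,b$ so that the weights $\alpha^{\frac p2},\beta^{\frac p2}$ line up with the factor $(s^p+\gamma^p)(a^p+b^p)$; once that is done, the rest is routine.
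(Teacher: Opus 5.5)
Your proposal is correct and is the paper's argument: the lower bound comes from testing $y=0$, and the upper bound from the triangle inequality followed by Lemma~\ref{le1}, after which the bound factors exactly as $2^{p-1}\bigl[(\alpha\beta)^{\frac p2}+\gamma^p\bigr]\bigl(\beta^{\frac p2}\|x\|^p+\alpha^{\frac p2}\|y\|^p\bigr)$. Your substitution $a=\beta^{1/2}\|x\|$, $b=\alpha^{1/2}\|y\|$ makes that factorization transparent, and your note on applying Lemma~\ref{le1} when $a$ or $b$ vanishes covers a point the paper leaves implicit.
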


\begin{proof}
	On the one hand, let $x \neq 0, y=0$, then, 
	\[
	\frac{\alpha^{\frac{p}{2}} \|\beta x\|^p + \beta^{\frac{p}{2}} \|\gamma x\|^p}{2^{p - 2} \beta^{\frac{p}{2}}\left[ (\alpha \beta)^{\frac{p}{2}} + \gamma^p \right]  \|x\|^p} = \frac{\alpha^{\frac{p}{2}} \beta^p + \beta^{\frac{p}{2}} \gamma^p}{2^{p - 2} \beta^{\frac{p}{2}}\left[ (\alpha \beta)^{\frac{p}{2}} + \gamma^p \right]}=\frac{1}{2^{p - 2}}.
	\]
	Hence, 
	\[C_{\mathrm{NJ}}^p(\alpha, \beta, \gamma, X) \geq \frac{1}{2^{p - 2}}.\]
	On the other hand, 
	\[
	\begin{aligned}
		&\frac{\alpha^{\frac{p}{2}} \|\beta x + \gamma y\|^p + \beta^{\frac{p}{2}} \|\gamma x - \alpha y\|^p}{2^{p-2} \left[ (\alpha\beta)^{\frac{p}{2}} + \gamma^p \right] \left( \beta^{\frac{p}{2}} \|x\|^p + \alpha^{\frac{p}{2}} \|y\|^p \right)}\\
		\stackrel{\text{Lamma \ref{le1}}}{\leq}& \frac{2^{p-1} \left[ \alpha^{\frac{p}{2}} \left( \beta^p \|x\|^p + \gamma^p \|y\|^p \right) + \beta^{\frac{p}{2}} \left( \gamma^p \|x\|^p + \alpha^p \|y\|^p \right) \right]}{2^{p-2} \left[ (\alpha\beta)^{\frac{p}{2}} + \gamma^p \right] \left( \beta^{\frac{p}{2}} \|x\|^p + \alpha^{\frac{p}{2}} \|y\|^p \right)}\\
		=&2.
	\end{aligned}
	\]
	This completes the proof.  
\end{proof}
\begin{remark}
	Let $p=1$ , then, $2\leq C_{\mathrm{NJ}}^p(\alpha, \beta, \gamma, X) \leq 2$, we have $C_{\mathrm{NJ}}(\alpha, \beta, \gamma, X)=2$.
\end{remark}
Clearly, $C_{\mathrm{NJ}}^p(\alpha, \beta, \gamma, X)$  also can be rewritten as the following form:
$$C_{\mathrm{NJ}}^p(\alpha, \beta, \gamma, X)=\Bigg\{\frac{\alpha^{\frac{p}{2}}\|\beta x+\gamma t y\|^p+\beta^{\frac{p}{2}}\|\gamma x-\alpha t y\|^p}{2^{p-2}\left[\alpha^{\frac{p}{2}} (\beta^p+\gamma^pt^p)+\beta^{\frac{p}{2}} (\gamma^p+\alpha^pt^p)\right]}: x, y \in S_X, 0 \leq t \leq 1\Bigg\} .$$

\begin{example}
	Let \( X = (\mathbb{R}^2, \|\cdot\|_1) \), then \( C_{\mathrm{NJ}}^p(2, 2, 2, l_1) = 2\).
\end{example}
\begin{proof}
	Let \( x = (0, 1) \), \( y = (1, 0) \), we obtain that
	\[
	\begin{aligned}
		C_{\mathrm{NJ}}^p(\alpha, \beta, \gamma, l_1) &\geq \frac{\alpha^{\frac{p}{2}}\|\beta x+\gamma y\|_1^p+\beta^{\frac{p}{2}}\|\gamma x-\alpha y\|_1^p}{2^{p-2}\left[(\alpha \beta)^{\frac{p}{2}}+\gamma^p\right]\left(\beta^{\frac{p}{2}}\|x\|_1^p+\alpha^{\frac{p}{2}}\|y\|_1^p\right)}\\
		&= \frac{\alpha^{\frac{p}{2}}(\beta+\gamma)^p + \beta^{\frac{p}{2}}(\alpha + \gamma)^p}{2^{p-2}\left[(\alpha\gamma)^{\frac{p}{2}} + \gamma^p\right]\left(\alpha^{\frac{p}{2}}+ \beta^{\frac{p}{2}} \right)}.
	\end{aligned}
	\]
	Now, let $\alpha=\beta=\gamma=2$, then 
	\[C_{\mathrm{NJ}}^p(2, 2, 2, l_1)= \frac{2^{\frac{p}{2}}4^p + 2^{\frac{p}{2}}4^p}{2^{p-2}\left(4^{\frac{p}{2}} + 2^p\right)\left(2^{\frac{p}{2}}+ 2^{\frac{p}{2}} \right)}=2.\]
\end{proof}

\begin{example}
	Let $\alpha,\beta,\gamma>0$, $p>1$ and $t\in[0,1]$, suppose that 
	$$
	\begin{aligned}
		&\gamma^2 (\beta + \gamma t)^{p - 2} \left( \alpha^{\frac{p}{2}} (\beta^p + \gamma^p t^p) + \beta^{\frac{p}{2}} (\gamma^p + \alpha^p t^p) \right) \\<& t^{p - 2} \left( \gamma^p +  \alpha^{\frac{p}{2}}\beta^{\frac{p}{2}} \right) \left( \beta^{\frac{p}{2}} \gamma^p + \alpha^{\frac{p}{2}} (\beta + \gamma t)^p \right)	
	\end{aligned}
	$$
	and
	$$
	\begin{aligned}	 
		&\gamma(\beta + \gamma)^{p - 1} \left( \alpha^{\frac{p}{2}}\beta^{p} +\alpha^{p}\beta^{\frac{p}{2}}+ \alpha^{\frac{p}{2}}\gamma^{p} + \beta^{\frac{p}{2}}\gamma^{p}  \right)\\ -& \left( \beta^{\frac{p}{2}}\gamma^{p} + \alpha^{\frac{p}{2}}(\beta + \gamma)^{p} \right) \left( \alpha^{\frac{p}{2}}\beta^{\frac{p}{2}}+\gamma^{p} \right)<0,
	\end{aligned}
	$$ 
	$X$ be the space $\mathbb{R}^2$ with
	the norm defined by
	\[
	\|x\| =
	\begin{cases} 
		\|x\|_1=|x_1| + |x_2| , & x_1x_2 \leq 0, \\
		\|x\|_\infty=\max\{|x_1|,|x_2|\} , &x_1x_2\geq0. 
	\end{cases}
	\] 
	Then$$C_{\mathrm{NJ}}^p(\alpha, \beta, \gamma, X)=\frac{\beta^\frac{p}{2}\gamma^p+\alpha^\frac{p}{2}(\beta+\gamma t)^p}{2^{p-2}\left[\alpha^{\frac{p}{2}} (\beta^p+\gamma^pt^p)+\beta^{\frac{p}{2}} (\gamma^p+\alpha^pt^p)\right]},$$
	where $t_0\in(0,1)$ is the only solution of the equation $$\gamma(\beta + \gamma)^{p - 1} \left( \alpha^{\frac{p}{2}}\beta^{p} + \alpha^{\frac{p}{2}}\gamma^{p} + \beta^{\frac{p}{2}}\gamma^{p} + \beta^{\frac{p}{2}}\alpha^{p} \right) = \left( \beta^{\frac{p}{2}}\gamma^{p} + \alpha^{\frac{p}{2}}(\beta + \gamma)^{p} \right) \left( \gamma^{p} + \beta^{\frac{p}{2}}\alpha^{\frac{p}{2}} \right).$$
	
\end{example}
\begin{proof}
	By Minkowski inequality, for any $\lambda, \mu \in[0,1]$ and any $x_1, x_2, y_1, y_2 \in B_X$ with $x=\lambda x_1+(1- \lambda) x_2,~y=\mu y_2+(1-\mu) y_2$, we have
	$$
	\begin{aligned}
		& \alpha^{\frac{p}{2}}\|\beta x+\gamma t y\|^p+\beta^{\frac{p}{2}}\|\gamma x-\alpha t y\|^p \\
		=&\alpha^{\frac{p}{2}}\left\|\lambda\left(\beta x_1+\gamma t y\right)+(1-\lambda)\left(\beta x_2+\gamma t y\right)\right\|^p\\ +&\beta^{\frac{p}{2}}\left\|\lambda\left(\gamma x_1-\alpha t y\right)+(1-\lambda)\left(\gamma x_2-\alpha t y\right)\right\|^p \\
		\leq& \alpha^{\frac{p}{2}}\left[\lambda\left\|\beta x_1+\gamma t y\right\|^p+(1-\lambda)\left\|\beta x_2+ \gamma t y\right\|^p\right] \\ +&\beta^{\frac{p}{2}}\left[\lambda\left\|\gamma x_1-\alpha t y\right\|^p+(1-\lambda)\left\|\gamma x_2-\alpha t y\right\|^p\right] \\
		\leq &\lambda \mu\left[\alpha^{\frac{p}{2}}\left\|\beta x_1+\gamma t y_1\right\|^p+\beta^{\frac{p}{2}}\left\|\gamma x_1-\alpha t y_1\right\|^p\right]\\+&\lambda(1-\mu)\left[\alpha^{\frac{p}{2}}\left\|\beta x_1+\gamma t y_2\right\|^p+\beta^{\frac{p}{2}}\left\|\gamma x_1-\alpha t y_2\right\|^p\right] \\
		+&(1-\lambda)\mu\left[\alpha^{\frac{p}{2}}\left\|\beta x_2+\gamma t y_1\right\|^p+\beta^{\frac{p}{2}}\left\|\gamma x_2-\alpha t y_1\right\|^p\right]\\+&(1-\lambda)(1-\mu)\left[\alpha^{\frac{p}{2}}\left\|\beta x_2+\gamma t y_2\right\|^p+\beta^{\frac{p}{2}}\left\|\gamma x_2-\alpha t y_2\right\|^p\right].
	\end{aligned}
	$$
	Thus, $$
	\begin{aligned}
		\alpha^{\frac{p}{2}}\|\beta x+\gamma t y\|^p+\beta^{\frac{p}{2}}\|\gamma x-\alpha t y\|^p 
		\leq \max \bigg\{&\alpha^{\frac{p}{2}}\left\|\beta x_1+\gamma t y_1\right\|^p+\beta^{\frac{p}{2}}\left\|\gamma x_1-\alpha t y_1\right\|^p,\\&\alpha^{\frac{p}{2}}\left\|\beta x_1+\gamma t y_2\right\|^p+\beta^{\frac{p}{2}}\left\|\gamma x_1-\alpha t y_2\right\|^p,\\
		&\alpha^{\frac{p}{2}}\left\|\beta x_2+\gamma t y_1\right\|^p+\beta^{\frac{p}{2}}\left\|\gamma x_2-\alpha t y_1\right\|^p, \\&\alpha^{\frac{p}{2}}\left\|\beta x_2+\gamma t y_2\right\|^p+\beta^{\frac{p}{2}}\left\|\gamma x_2-\alpha t y_2\right\|^p\bigg\}.
	\end{aligned}
	$$
	Then, to determine the values of the constant  $C_{\mathrm{NJ}}^p(\alpha, \beta, \gamma, X)$ , we only need to examine its behavior at the extreme points of the unit ball.
	$$\operatorname{ex}\left(B_X\right) = \big\{(1,0), (1,1), (0,1), (-1,0), (-1,-1), (0,-1)\big\}.$$
	
	Hence, we only need to prove $\alpha^\frac{p}{2}\|\beta x+\gamma t y\|^p+\beta^\frac{p}{2}\|\gamma x-\alpha t y\|^p \leq \beta^\frac{p}{2}\gamma^p+\alpha^\frac{p}{2}(\beta+\gamma t)^p$ for any $x, y \in \operatorname{ex}\left(B_X\right)$ and every $t \in[0,1]$.
	For these $x, y$, by some calculations, we  have $$\alpha^\frac{p}{2}\|\beta x+\gamma t y\|^p+\beta^\frac{p}{2}\|\gamma x-\alpha t y\|^p \leq \beta^\frac{p}{2}\gamma^p+\alpha^\frac{p}{2}(\beta+\gamma t)^p$$ for every $t \in[0,1]$. Therefore,$$C_{\mathrm{N J}}^p(\alpha,\beta,\gamma,X)
	\leq\sup_{t\in[0,1]}\left\{\frac{\beta^\frac{p}{2}\gamma^p+\alpha^\frac{p}{2}(\beta+\gamma t)^p}{2^{p-2}\left[\alpha^{\frac{p}{2}} (\beta^p+\gamma^pt^p)+\beta^{\frac{p}{2}} (\gamma^p+\alpha^pt^p)\right]}\right\}.$$
	Now we consider the function
	$$f(t)=\frac{\beta^\frac{p}{2}\gamma^p+\alpha^\frac{p}{2}(\beta+\gamma t)^p}{\alpha^{\frac{p}{2}} (\beta^p+\gamma^pt^p)+\beta^{\frac{p}{2}} (\gamma^p+\alpha^pt^p)},$$
	then, 
	$$f'(t) = \frac{\alpha^{\frac{p}{2}}p}{\left[ \alpha^{\frac{p}{2}} (\beta^p + \gamma^p t^p) + \beta^{\frac{p}{2}} (\gamma^p + \alpha^p t^p) \right]^2} h(t),$$
	where 
	$$
	\begin{aligned}
		h(t)=& \gamma (\beta + \gamma t)^{p-1} \cdot \left[ \alpha^{\frac{p}{2}}(\beta^{p} + \gamma^{p} t^{p}) + \beta^{\frac{p}{2}}(\gamma^{p} + \alpha^{p} t^{p}) \right]\\ -& t^{p-1} \left( \beta^{\frac{p}{2}}\gamma^p + \alpha^{\frac{p}{2}}(\beta + \gamma t)^p \right) \left( \gamma^p +  \alpha^{\frac{p}{2}}\beta^{\frac{p}{2}} \right),
	\end{aligned}
	$$ 
	then 
	$$
	\begin{aligned}
		h'(t) &= (p - 1) \bigg[ \gamma^2 (\beta + \gamma t)^{p - 2} \left( \alpha^{\frac{p}{2}} (\beta^p + \gamma^p t^p) + \beta^{\frac{p}{2}} (\gamma^p + \alpha^p t^p) \right)\\ &- t^{p - 2} \left( \gamma^p + \alpha^{\frac{p}{2}}\beta^{\frac{p}{2}}  \right) \left( \beta^{\frac{p}{2}} \gamma^p + \alpha^{\frac{p}{2}} (\beta + \gamma t)^p \right) \bigg] <0,
	\end{aligned}
	$$
	thus $h(t)$  is decreasing from $$\alpha^{\frac{p}{2}}  \beta^{2p - 1}\gamma + \beta^{\frac{3p}{2} - 1} \gamma^{p + 1}>0$$ to $$
	\begin{aligned}	 
		&\gamma(\beta + \gamma)^{p - 1} \left( \alpha^{\frac{p}{2}}\beta^{p} +\alpha^{p}\beta^{\frac{p}{2}}+ \alpha^{\frac{p}{2}}\gamma^{p} + \beta^{\frac{p}{2}}\gamma^{p}  \right)\\ -& \left( \beta^{\frac{p}{2}}\gamma^{p} + \alpha^{\frac{p}{2}}(\beta + \gamma)^{p} \right) \left( \alpha^{\frac{p}{2}}\beta^{\frac{p}{2}}+\gamma^{p} \right)<0.
	\end{aligned}
	$$ 
	Therefore, there is an only $t_0 \in (0, 1)$ such that $g(t_0) = 0$.
	Then, we have$$C_{\mathrm{N J}}^p(\alpha,\beta,\gamma,X)
	\leq\frac{\beta^\frac{p}{2}\gamma^p+\alpha^\frac{p}{2}(\beta+\gamma t_0)^p}{2^{p-2}\left[\alpha^{\frac{p}{2}} (\beta^p+\gamma^pt_0^p)+\beta^{\frac{p}{2}} (\gamma^p+\alpha^pt_0^p)\right]}.$$
	On the other hand, let $x=(1,0), y=(t_0,t_0)$, then we have
	$$\frac{\alpha^{\frac{p}{2}}\|\beta x+\gamma y\|^p+\beta^{\frac{p}{2}}\|\gamma x-\alpha y\|^p}{2^{p-2}\left[(\alpha \beta)^{\frac{p}{2}}+\gamma^p\right]\left(\beta^{\frac{p}{2}}\|x\|^p+\alpha^{\frac{p}{2}}\|y\|^p\right)}=\frac{\beta^\frac{p}{2}\gamma^p+\alpha^\frac{p}{2}(\beta+\gamma t_0)^p}{2^{p-2}\left[\alpha^{\frac{p}{2}} (\beta^p+\gamma^pt_0^p)+\beta^{\frac{p}{2}} (\gamma^p+\alpha^pt_0^p)\right]}.$$ This implies that $$C_{\mathrm{N J}}^p(\alpha,\beta,\gamma,X)\geq\frac{\beta^\frac{p}{2}\gamma^p+\alpha^\frac{p}{2}(\beta+\gamma t_0)^p}{2^{p-2}\left[\alpha^{\frac{p}{2}} (\beta^p+\gamma^pt_0^p)+\beta^{\frac{p}{2}} (\gamma^p+\alpha^pt_0^p)\right]},$$ as desired.
\end{proof}

\section{Relationship with other geometric constants}
In this part, we establish the new constant's connections with the modulus of convexity  $\delta_X(\varepsilon)$, the geometric constant $C_{\mathrm{NJ}}(X)$ and the James constant $J(X)$. Since  $J(X)$ and $C_{\mathrm{NJ}}(X)$ have been introduced in the first part, we now recall the definition of  $\delta_X(\varepsilon)$.
\begin{definition}
	\cite{04,05}Let \((X, \|\cdot\|)\) be a Banach space. For every \(\varepsilon \in [0, 2]\), we define the modulus of convexity of \(\|\cdot\|\) by
	\[\delta_X(\varepsilon) = \inf\left\{ 1 - \frac{\|x + y\|}{2} \,:\, x, y \in S_X,\ \|x - y\| = \varepsilon \right\}. \]
\end{definition}

Then, we establish the relationship between $C_{\mathrm{N J}}^p(\alpha,\beta,\gamma,X)$ and $\delta_X(\varepsilon)$ by restricting $\varepsilon \in (1, 2]$.

\begin{theorem}\label{the1}
	Let \(\beta \leq \gamma\) and \(X\) be a Banach space. If  
	\[C_{\mathrm{N J}}^p(\alpha,\beta,\gamma,X) < \frac{\alpha^\frac{p}{2}(\beta+ \gamma)^p+\beta^\frac{p}{2}(\alpha+\gamma)^p(\varepsilon - 1)^p}{2^{p-2}(\alpha^{\frac{p}{2}} \beta^{\frac{p}{2}}+ \gamma^p)(\alpha^{\frac{p}{2}} + \beta^{\frac{p}{2}})}\]
	for all $p \geq 1$, \(\varepsilon \in (1, 2]\), then \(\delta_X(\varepsilon) > 0\).
\end{theorem}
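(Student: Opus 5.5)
We argue by contraposition: assuming $\delta_X(\varepsilon)=0$ for some $\varepsilon\in(1,2]$, we show that $C_{\mathrm{NJ}}^p(\alpha,\beta,\gamma,X)$ is at least the displayed quantity. If $\delta_X(\varepsilon)=0$, the definition of the modulus of convexity gives sequences $x_n,y_n\in S_X$ with $\|x_n-y_n\|=\varepsilon$ and $\|x_n+y_n\|\to 2$. With $\|x_n\|=\|y_n\|=1$, the denominator in the definition of $C_{\mathrm{NJ}}^p(\alpha,\beta,\gamma,X)$ is exactly
\[
2^{p-2}\bigl[(\alpha\beta)^{\frac p2}+\gamma^p\bigr]\bigl(\alpha^{\frac p2}+\beta^{\frac p2}\bigr),
\]
which is the denominator in the hypothesis. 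It therefore suffices to show
\[
\liminf_{n}\Bigl(\alpha^{\frac p2}\|\beta x_n+\gamma y_n\|^p+\beta^{\frac p2}\|\gamma x_n-\alpha y_n\|^p\Bigr)\ \ge\ \alpha^{\frac p2}(\beta+\gamma)^p+\beta^{\frac p2}(\alpha+\gamma)^p(\varepsilon-1)^p .
\]
This would contradict the strict inequality in the hypothesis.

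The first term is where $\beta\le\gamma$ enters. Write $\beta x_n+\gamma y_n=\gamma(x_n+y_n)-(\gamma-\beta)x_n$. Since $\gamma-\beta\ge0$,
\[
\|\beta x_n+\gamma y_n\|\ge \gamma\|x_n+y_n\|-(\gamma-\beta)\to 2\gamma-\gamma+\beta=\beta+\gamma .
\]
This term also has the trivial upper bound $\beta+\gamma$, so it in fact converges to $\beta+\gamma$.

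The second term $\|\gamma x_n-\alpha y_n\|$ must be bounded below by roughly $(\alpha+\gamma)(\varepsilon-1)$, and this is the step I expect to be the main obstacle. The natural decomposition is
\[
\gamma x_n-\alpha y_n=\tfrac{\alpha+\gamma}{2}(x_n-y_n)+\tfrac{\gamma-\alpha}{2}(x_n+y_n).
\]
It gives $\|\gamma x_n-\alpha y_n\|\ge \frac{\alpha+\gamma}{2}\varepsilon-|\gamma-\alpha|+o(1)$. When $\alpha=\gamma$ this already dominates $(\alpha+\gamma)(\varepsilon-1)$, because $\varepsilon-1\le\varepsilon/2$ on $(1,2]$. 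For general parameters I would also use the alternative splittings $\gamma(x_n-y_n)+(\gamma-\alpha)y_n$ and $\alpha(x_n-y_n)+(\gamma-\alpha)x_n$, and take the best of the resulting lower bounds. Failing that, I would use the freedom in choosing the pair: replace $(x_n,y_n)$ by $(y_n,x_n)$ or by $(x_n,-y_n)$ to reposition which combination carries the $\varepsilon$. The factor $\varepsilon-1$ suggests the intended estimate is of the form $\|u-v\|-\|v\|$ with a unit vector $v$. I would try to realise it as $\|\gamma x_n-\alpha y_n\|\ge(\alpha+\gamma)\bigl(\|x_n-y_n\|-1\bigr)$ up to $o(1)$, checking the cases $\alpha\le\gamma$ and $\alpha>\gamma$ separately.

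Once both lower bounds are in place, I would raise them to the $p$-th power, which is monotone since $p\ge1$, add them with weights $\alpha^{p/2}$ and $\beta^{p/2}$, and pass to the limit in the quotient. This gives $C_{\mathrm{NJ}}^p(\alpha,\beta,\gamma,X)\ge$ the right-hand side of the hypothesis, contradicting the assumed strict inequality. Hence $\delta_X(\varepsilon)>0$.
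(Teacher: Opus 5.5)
Your overall plan matches the paper's: take $x_n,y_n\in S_X$ with $\|x_n-y_n\|=\varepsilon$ and $\|x_n+y_n\|\to 2$, bound both numerator terms from below by the triangle inequality, and compare the $\liminf$ of the quotient with $C_{\mathrm{NJ}}^p(\alpha,\beta,\gamma,X)$. As written, though, you never establish $\|\gamma x_n-\alpha y_n\|\ge(\alpha+\gamma)(\varepsilon-1)$; you only list candidate splittings. The paper closes this in one line by writing
\[
\gamma x_n-\alpha y_n=(\alpha+\gamma)(x_n-y_n)-(\alpha x_n-\gamma y_n),
\]
so $\|\gamma x_n-\alpha y_n\|\ge(\alpha+\gamma)\varepsilon-\alpha-\gamma=(\alpha+\gamma)(\varepsilon-1)$ for every $n$, with no $o(1)$ term and no case split. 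Your own splittings also work once you carry out the case split you mention. For $\alpha\le\gamma$, the splitting $\gamma(x_n-y_n)+(\gamma-\alpha)y_n$ gives $\gamma\varepsilon-(\gamma-\alpha)$. For $\alpha\ge\gamma$, the splitting $\alpha(x_n-y_n)+(\gamma-\alpha)x_n$ gives $\alpha\varepsilon-(\alpha-\gamma)$. Each exceeds $(\alpha+\gamma)(\varepsilon-1)$ by $\min\{\alpha,\gamma\}(2-\varepsilon)\ge 0$. Drop the fallback of replacing $(x_n,y_n)$ by $(x_n,-y_n)$. It is unnecessary, and it swaps the roles of $x_n+y_n$ and $x_n-y_n$, which would break your first-term estimate.

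On the first term your route differs from the paper's, and yours is the correct one. The paper splits $\beta x_n+\gamma y_n=\beta(x_n+y_n)+(\gamma-\beta)y_n$. The resulting lower bound $\beta\|x_n+y_n\|-(\gamma-\beta)$ tends to $3\beta-\gamma$, not to $\beta+\gamma$ as the paper claims, so that step only works when $\beta=\gamma$. Your splitting $\gamma(x_n+y_n)-(\gamma-\beta)x_n$ puts the larger coefficient on $x_n+y_n$ and does give $\beta+\gamma$. This is where $\beta\le\gamma$ is genuinely used. For $\beta>\gamma$, the mirror splitting $\beta(x_n+y_n)-(\beta-\gamma)y_n$ gives the same limit, so the hypothesis could in fact be dropped. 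With both bounds in hand, the second positive because $\varepsilon>1$ and the first eventually positive, raising to the $p$-th power and passing to the $\liminf$ completes the contradiction as you describe.
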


\begin{proof}
	Assume that \(\delta_X(\varepsilon) = 0\), then there exist \(x_n, y_n \in S_X\) such that \(\|x_n - y_n\| = \varepsilon\) for all \(n \in \mathbb{N}\) and \(\lim_{n \to \infty} \|x_n + y_n\| = 2\). According to the following elementary inequality
	\[
	\begin{aligned}
		\|\beta x_n + \gamma y_n\| &= \| \beta (x_n + y_n) + (\gamma - \beta) y_n \| \\
		& \geq \beta \|x_n + y_n\| - (\gamma - \beta) \|y_n\| \\
		&\to \beta + \gamma \quad (n \to \infty)
	\end{aligned}
	\]
	and
	\[
	\begin{aligned}
		\|\gamma x_n - \alpha y_n\| &= \| (\gamma + \alpha) (x_n - y_n) + \gamma y_n - \alpha x_n \| \\
		& \geq (\gamma + \alpha) \|x_n - y_n\| - \gamma \|y_n\| - \alpha \|x_n\|\\
		&=(\gamma + \alpha) (\varepsilon - 1).
	\end{aligned}
	\]
	Thus, we can deduce that
	\[
	\begin{aligned}
		\frac{\alpha^\frac{p}{2}(\beta+ \gamma)^p+\beta^\frac{p}{2}(\alpha+\gamma)^p(\varepsilon - 1)^p}{2^{p-2}(\alpha^{\frac{p}{2}} \beta^{\frac{p}{2}}+ \gamma^p)(\alpha^{\frac{p}{2}} + \beta^{\frac{p}{2}})} &\leq \liminf_{n \to \infty} \frac{\alpha^{\frac{p}{2}} \|\beta x + \gamma y\|^p + \beta^{\frac{p}{2}} \|\gamma x - \alpha y\|^p}{2^{p - 2} (\alpha^{\frac{p}{2}} \beta^{\frac{p}{2}} + \gamma^p) (\beta^{\frac{p}{2}} \|x_n\|^p +  \alpha^{\frac{p}{2}}\|y_n\|^p)} \\
		& \leq C_{\mathrm{N J}}^p(\alpha,\beta,\gamma,X) \\
		& < \frac{\alpha^\frac{p}{2}(\beta+ \gamma)^p+\beta^\frac{p}{2}(\alpha+\gamma)^p(\varepsilon - 1)^p}{2^{p-2}(\alpha^{\frac{p}{2}} \beta^{\frac{p}{2}}+ \gamma^p)(\alpha^{\frac{p}{2}} + \beta^{\frac{p}{2}})} \\
	\end{aligned}
	\]
	a contradiction, as desired.
\end{proof}

\begin{proposition}\label{prop2}
	Let \(\beta \leq \gamma\) and \(X\) be a Banach space, then
	\[
	\begin{aligned}
		C_{\mathrm{N J}}^p(\alpha,\beta,\gamma,X) \geq \frac{\beta^{\frac{p}{2}} [(\alpha + \gamma)(\varepsilon - 1)]^p + \left( \alpha^{\frac{1}{2}} \beta - 2\alpha^{\frac{1}{2}} \beta \delta_X(\varepsilon) - \alpha^{\frac{1}{2}} \gamma \right)^p}{2^{p - 2} (\alpha^{\frac{p}{2}} \beta^{\frac{p}{2}} + \gamma^p) (\alpha^{\frac{p}{2}} + \beta^{\frac{p}{2}})},
	\end{aligned}
	\]
	where \(p \geq 1\), \(\varepsilon \in (1, 2]\).
\end{proposition}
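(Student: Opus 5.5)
The plan is to argue directly, as in the proof of Theorem \ref{the1}, but now without a contradiction. We pick near-extremal pairs for $\delta_X(\varepsilon)$ and insert them into the quotient defining $C_{\mathrm{NJ}}^p(\alpha,\beta,\gamma,X)$.

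\emph{Choice of test pairs.} Fix $\varepsilon\in(1,2]$ and $\eta>0$. By the definition of $\delta_X(\varepsilon)$ as an infimum, there are $x,y\in S_X$ with $\|x-y\|=\varepsilon$ and
\[
\|x+y\|\ge 2-2\delta_X(\varepsilon)-2\eta .
\]
Since $\|x\|=\|y\|=1$, the denominator of the quotient is exactly
\[
2^{p-2}\bigl(\alpha^{\frac p2}\beta^{\frac p2}+\gamma^p\bigr)\bigl(\alpha^{\frac p2}+\beta^{\frac p2}\bigr),
\]
which matches the denominator in the statement. So it suffices to bound the two numerator terms from below separately.

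\emph{Second numerator term.} Exactly as in Theorem \ref{the1}, the identity $\gamma x-\alpha y=(\gamma+\alpha)(x-y)+\gamma y-\alpha x$ and the triangle inequality give
\[
\|\gamma x-\alpha y\|\ge(\alpha+\gamma)(\varepsilon-1)>0 .
\]
Raising to the $p$-th power and multiplying by $\beta^{\frac p2}$ produces the term $\beta^{\frac p2}[(\alpha+\gamma)(\varepsilon-1)]^p$.

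\emph{First numerator term.} Here we must produce $\bigl(\alpha^{\frac12}\beta-2\alpha^{\frac12}\beta\delta_X(\varepsilon)-\alpha^{\frac12}\gamma\bigr)^p$, which is $\alpha^{\frac p2}$ times $(\beta-2\beta\delta_X(\varepsilon)-\gamma)^p$. The natural estimate uses the decomposition $\beta x+\gamma y=\beta(x+y)+(\gamma-\beta)y$, together with $\beta\le\gamma$:
\[
\|\beta x+\gamma y\|\ge \beta\|x+y\|-(\gamma-\beta)\ge \beta\bigl(2-2\delta_X(\varepsilon)-2\eta\bigr)-\gamma+\beta .
\]
The right-hand side is at least $\beta-2\beta\delta_X(\varepsilon)-\gamma-2\beta\eta$, since we may discard the extra nonnegative $\beta$. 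Multiplying by $\alpha^{\frac12}$, raising to the $p$-th power and adding the second term gives the quotient for this pair, which is at most $C_{\mathrm{NJ}}^p(\alpha,\beta,\gamma,X)$. Letting $\eta\to0$ then yields the claimed inequality.

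\emph{Main obstacle.} The delicate step is passing from a lower bound on the norm to a lower bound on its $p$-th power. This requires the base to be nonnegative. But since $\beta\le\gamma$, the quantity $\beta-2\beta\delta_X(\varepsilon)-\gamma$ is $\le0$, so for non-integer $p$ the displayed expression must be read suitably. I would first split into two cases.
\begin{itemize}
\item If the base is nonnegative, the argument above goes through verbatim.
\item If it is negative, one can instead aim for $\|\beta x+\gamma y\|\ge\bigl|\beta-2\beta\delta_X(\varepsilon)-\gamma\bigr|$, trying the alternative decomposition $\beta x+\gamma y=\gamma(x+y)-(\gamma-\beta)x$, which gives $\|\beta x+\gamma y\|\ge\beta+\gamma-2\gamma\delta_X(\varepsilon)$.
\end{itemize}
If neither estimate dominates in general, I would fall back on interpreting the $p$-th power of a negative base as $0$ (i.e.\ using the positive part). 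The bound then reduces to the trivially valid contribution of the second term, and the inequality still holds.
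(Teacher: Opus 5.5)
Your estimates are the paper's own: the same decompositions $\beta x+\gamma y=\beta(x+y)+(\gamma-\beta)y$ and $\gamma x-\alpha y=(\alpha+\gamma)(x-y)+\gamma y-\alpha x$, applied to near-extremal pairs for $\delta_X(\varepsilon)$. The only procedural difference is that the paper rearranges the resulting inequality to isolate $\delta_X(\varepsilon)$ and then inverts it, while you substitute directly.

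The obstacle you flag is genuine, and it lies in the statement itself, so you need to settle it rather than leave two open branches. Since $\beta\le\gamma$ and $\delta_X(\varepsilon)\ge0$, the base $\beta-2\beta\delta_X(\varepsilon)-\gamma$ is always $\le 0$; your first case occurs only when it equals $0$. For even $p$ the literal inequality is false. Take a Hilbert space $H$ with $p=2$, $\alpha=\beta=\gamma=1$, $\varepsilon=2$. Then $C_{\mathrm{NJ}}^2(1,1,1,H)=C_{\mathrm{NJ}}(H)=1$ and $\delta_H(2)=1$, but the right-hand side equals $(4+4)/4=2$. In particular your second branch cannot work: here the only admissible pairs are $y=-x$, which give $\|\beta x+\gamma y\|=0<2=|\beta-2\beta\delta_H(2)-\gamma|$. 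So the positive-part reading is not a fallback but the only tenable reading. For odd $p$ the literal reading also follows, since the first term is $\le 0$. With the positive-part reading your proof is complete. However, the first term then vanishes identically, and the proposition says nothing beyond $C_{\mathrm{NJ}}^p(\alpha,\beta,\gamma,X)\ge \beta^{\frac p2}[(\alpha+\gamma)(\varepsilon-1)]^p/\bigl(2^{p-2}(\alpha^{\frac p2}\beta^{\frac p2}+\gamma^p)(\alpha^{\frac p2}+\beta^{\frac p2})\bigr)$.

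The paper's proof has the same hole, unacknowledged: its last step raises $\bigl(\cdots\bigr)^{1/p}\ge\alpha^{\frac12}\bigl(\beta-2\beta\delta_X(\varepsilon)-\gamma\bigr)$ to the $p$-th power even though the right side is nonpositive. When rearranging, it also writes $\tfrac12$ where $1+\tfrac12=\tfrac32$ belongs. That lost amount is exactly the surplus you throw away, and it is $2\beta$, not $\beta$: $\beta(2-2\delta_X(\varepsilon)-2\eta)-\gamma+\beta=3\beta-2\beta\delta_X(\varepsilon)-\gamma-2\beta\eta$. If you keep it and let $\eta\to0$, your argument gives the correct and non-vacuous bound
\[
C_{\mathrm{NJ}}^p(\alpha,\beta,\gamma,X)\ge\frac{\beta^{\frac p2}[(\alpha+\gamma)(\varepsilon-1)]^p+\alpha^{\frac p2}\bigl(\max\{3\beta-2\beta\delta_X(\varepsilon)-\gamma,\,0\}\bigr)^p}{2^{p-2}\bigl(\alpha^{\frac p2}\beta^{\frac p2}+\gamma^p\bigr)\bigl(\alpha^{\frac p2}+\beta^{\frac p2}\bigr)}.
\]
This contains the stated proposition under the positive-part reading.
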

\begin{proof}
	Since $\delta_X(\varepsilon)$ has an equivalent definition \[\delta_X(\varepsilon) = \inf\left\{ 1 - \frac{\|x + y\|}{2} \,:\, x, y \in B_X,\ \|x - y\| \geq \varepsilon \right\},\quad \varepsilon \in [0, 2],\]  we suppose that there exists \(x, y\in B_X \subset X\) such that \(\|x - y\| \geq \varepsilon\), then
	\[
	\begin{aligned}
		C_{\mathrm{N J}}^p(\alpha,\beta,\gamma,X) &\geq \frac{\alpha^{\frac{p}{2}} \|\beta x + \gamma y\|^p + \beta^{\frac{p}{2}} \|\gamma x - \alpha y\|^p}{2^{p - 2} (\alpha^{\frac{p}{2}} \beta^{\frac{p}{2}} + \gamma^p) (\beta^{\frac{p}{2}} \|x\|^p + \alpha^{\frac{p}{2}} \|y\|^p)}\\
		&\geq \frac{\alpha^{\frac{p}{2}} (\beta \|x_n + y_n\| - (\gamma - \beta) \|y_n\|)^p + \beta^{\frac{p}{2}} [(\alpha+\gamma) (\varepsilon - 1)]^p}{2^{p - 2} (\alpha^{\frac{p}{2}} \beta^{\frac{p}{2}} + \gamma^p) (\beta^{\frac{p}{2}} \|x\|^p + \alpha^{\frac{p}{2}} \|y\|^p)} .
	\end{aligned}
	\]
	Thus, 
	\[
	\begin{aligned}
		& 1 + \frac{\alpha^{\frac{1}{2}} \beta - \alpha^{\frac{1}{2}} \gamma - (C_{\mathrm{N J}}^p(\alpha,\beta,\gamma,X)  \cdot 2^{p - 2} (\alpha^{\frac{p}{2}} \beta^{\frac{p}{2}} + \gamma^p) (\alpha^{\frac{p}{2}} + \beta^{\frac{p}{2}}) - \beta^{\frac{p}{2}} [(\gamma + \alpha) (\varepsilon - 1)^p])^{\frac{1}{p}}}{2 \alpha^{\frac{1}{2}} \beta}\\\leq& 1 - \frac{\|x + y\|}{2}.
	\end{aligned}
	\]
	Hence, 
	\[
	\begin{aligned}
		\frac{1}{2} - \frac{\alpha^{\frac{1}{2}} \gamma + (C_{\mathrm{N J}}^p(\alpha,\beta,\gamma,X) \cdot 2^{p - 2} (\alpha^{\frac{p}{2}} \beta^{\frac{p}{2}} + \gamma^p) (\alpha^{\frac{p}{2}} + \beta^{\frac{p}{2}}) - \beta^{\frac{p}{2}} [(\alpha+\gamma) (\varepsilon - 1)^p])^{\frac{1}{p}}}{2 \alpha^{\frac{1}{2}} \beta} \leq \delta_X (\varepsilon).
	\end{aligned}
	\]
	So that, we have
	\[
	\begin{aligned}
		C_{\mathrm{N J}}^p(\alpha,\beta,\gamma,X) \geq \frac{\beta^{\frac{p}{2}} [(\alpha + \gamma)(\varepsilon - 1)]^p + \left( \alpha^{\frac{1}{2}} \beta - 2\alpha^{\frac{1}{2}} \beta \delta_X(\varepsilon) - \alpha^{\frac{1}{2}} \gamma \right)^p}{2^{p - 2} (\alpha^{\frac{p}{2}} \beta^{\frac{p}{2}} + \gamma^p) (\alpha^{\frac{p}{2}} + \beta^{\frac{p}{2}})}.
	\end{aligned}
	\]
\end{proof}

\begin{remark}
	The restriction $\varepsilon \in (1, 2]$ in Theorem~\ref{the1} and Proposition~\ref{prop2} are imposed to ensure the positivity of $\varepsilon-1.$
\end{remark}

Next, we establish a relationship among \( C_{\mathrm{NJ}}^p(\alpha, \beta, \gamma, X) \), \( C_{\mathrm{NJ}}(X) \), and \( J(X) \).

\begin{theorem}
	Let \( X \) be a Banach space. Then, the following conclusions are equivalent:
	\begin{enumerate}
		\item[\textit{(i)}] \( J(X) = 2 \).
		\item[\textit{(ii)}] \( C_{\mathrm{NJ}}(X) = 2 \).
		\item[\textit{(iii)}] \(C_{\mathrm{NJ}}^p(\alpha, \beta, \gamma, X) = \frac{\alpha^{\frac{p}{2}}(\beta+\gamma)^p+\beta^{\frac{p}{2}}(\alpha+\gamma )^p}{2^{p-2}\left[(\alpha \beta)^{\frac{p}{2}}+\gamma^p\right]\left(\alpha^{\frac{p}{2}}+\beta^{\frac{p}{2}}\right)}  \) .
	\end{enumerate}
\end{theorem}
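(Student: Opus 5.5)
The plan is to prove the cycle (i)$\Leftrightarrow$(ii), (i)$\Rightarrow$(iii), (iii)$\Rightarrow$(i). Write
\[
M=\frac{\alpha^{\frac{p}{2}}(\beta+\gamma)^p+\beta^{\frac{p}{2}}(\alpha+\gamma )^p}{2^{p-2}\left[(\alpha \beta)^{\frac{p}{2}}+\gamma^p\right]\left(\alpha^{\frac{p}{2}}+\beta^{\frac{p}{2}}\right)},
\]
and work throughout with the rewritten form of $C_{\mathrm{NJ}}^p(\alpha,\beta,\gamma,X)$ over $x,y\in S_X$, $0\le t\le 1$. The equivalence (i)$\Leftrightarrow$(ii) is exactly Property (iii): both statements say that $X$ fails to be uniformly non-square.

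\textbf{(i)$\Rightarrow$(iii), lower bound.} If $J(X)=2$, choose $x_n,y_n\in S_X$ with $\|x_n+y_n\|\to 2$ and $\|x_n-y_n\|\to 2$. As in the proof of Theorem~\ref{the1}, split according to the sign of $\gamma-\beta$. If $\gamma\ge\beta$, write $\beta x_n+\gamma y_n=\beta(x_n+y_n)+(\gamma-\beta)y_n$. If $\beta>\gamma$, write $\beta x_n+\gamma y_n=\gamma(x_n+y_n)+(\beta-\gamma)x_n$. In either case the reverse triangle inequality gives $\|\beta x_n+\gamma y_n\|\to\beta+\gamma$. The same device applied to $x_n-y_n$ gives $\|\gamma x_n-\alpha y_n\|\to\alpha+\gamma$. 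Since $\|x_n\|=\|y_n\|=1$, the quotients at $(x_n,y_n)$ tend to $M$, so $C_{\mathrm{NJ}}^p(\alpha,\beta,\gamma,X)\ge M$.

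\textbf{(i)$\Rightarrow$(iii), upper bound.} By the triangle inequality, for $x,y\in S_X$ and $t\in[0,1]$ the quotient is at most
\[
g(t)=\frac{\alpha^{\frac p2}(\beta+\gamma t)^p+\beta^{\frac p2}(\gamma+\alpha t)^p}{2^{p-2}\left[\alpha^{\frac{p}{2}} (\beta^p+\gamma^pt^p)+\beta^{\frac{p}{2}} (\gamma^p+\alpha^pt^p)\right]},
\]
and $g(1)=M$. So it suffices to show $\sup_{[0,1]}g=g(1)$. I would compute $g'$ and analyze its sign exactly as in the second Example, through an auxiliary function $h$. This is the step I expect to be the \emph{main obstacle}. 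For $p=2$ and $\alpha=\beta=\gamma$ it holds, since $g$ is increasing. For general parameters it may fail: by Proposition~\ref{p1}, $g(0)=2^{2-p}$, and for $p<2$ this can exceed $g(1)$. In that case I would restrict to the parameter range where $g$ is nondecreasing, or read (iii) as the statement that the supremum is attained along the ``$t=1$ direction''. I would state any such restriction explicitly rather than hide it.

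\textbf{(iii)$\Rightarrow$(i).} Suppose $J(X)<2$, so $X$ is uniformly non-square, and assume the constant equals $M$. Take maximizing triples $(x_n,y_n,t_n)$. Pass to a subsequence with $t_n\to t_0$, or to the ultrapower $X_{\mathcal U}$, which is still uniformly non-square and has the same constant. Provided $t=1$ is the strict maximizer of $g$ (the same monotonicity analysis as above), the equality forces $t_0=1$ and forces equality in both triangle inequalities in the limit: $\|\beta x_n+\gamma y_n\|\to\beta+\gamma$ and $\|\gamma x_n-\alpha y_n\|\to\alpha+\gamma$. From the first, the reverse triangle inequality gives $\|x_n+y_n\|\to2$; from the second, $\|x_n-y_n\|\to2$. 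Hence $J(X)=2$, a contradiction.
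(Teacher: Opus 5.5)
\textbf{The gap is real and cannot be closed.} The step you flag as the main obstacle is not a technicality. The upper bound $\sup g=g(1)$ is false in general, and so is the statement, for general parameters and under any reading of how $p,\alpha,\beta,\gamma$ are quantified in (iii).

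For (i)$\Rightarrow$(iii), take $p=2$, $\alpha=4$, $\beta=1$, $\gamma=2$, and $X=\ell_1^2$ (so $J(X)=2$), with $x=e_1$, $y=\frac12 e_2$. The quotient is $\frac{4\cdot 2^2+1\cdot 4^2}{(4+4)(1+4\cdot\frac14)}=2$, whereas $M=\frac{4\cdot 9+1\cdot 36}{8\cdot 5}=\frac95$. More generally, for $p=2$ one has $g(t)=1+\frac{4\alpha\beta\gamma t}{(\alpha\beta+\gamma^2)(\beta+\alpha t^2)}$. This peaks at $t=\sqrt{\beta/\alpha}$, so $t=1$ is the maximizer only when $\alpha=\beta$.

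For (iii)$\Rightarrow$(i), take $p=1$. Then $M=2$, and by the remark after Proposition~\ref{p1} the constant equals $2$ in every space, including a Hilbert space, where $J=\sqrt2$. Here $g\equiv 2$, which is exactly where your proviso ``$t=1$ is the strict maximizer'' breaks.

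Your diagnosis of the failure is also off. The value $g(0)=2^{2-p}$ never exceeds $g(1)$, since $(a+b)^p\ge a^p+b^p$ gives $\alpha^{\frac p2}(\beta+\gamma)^p+\beta^{\frac p2}(\gamma+\alpha)^p\ge[(\alpha\beta)^{\frac p2}+\gamma^p](\alpha^{\frac p2}+\beta^{\frac p2})$. The real obstructions are two:
\begin{itemize}
\item interior maxima of $g$;
\item the normalization $t\in[0,1]$ itself. The quotient is not symmetric in $x$ and $y$; its symmetry is $(x,y,\alpha,\beta)\mapsto(y,-x,\beta,\alpha)$. So when $\alpha\neq\beta$ the case $\|y\|>\|x\|$ is lost, and the rewritten form can be strictly smaller than the constant.
\end{itemize}
The correct triangle-inequality bound is $\sup_{t\ge 0}g(t)$, and $\ell_1^2$ attains $g(t)$ at $x=e_1$, $y=te_2$ for every $t$. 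Hence (i)$\Rightarrow$(iii) holds for all $X$ exactly when $\sup_{t\ge0}g=g(1)$. If $t=1$ is the unique maximizer of $g$ on $[0,\infty]$ (e.g.\ $p=2$ with $\alpha=\beta$, or $\alpha=\beta=\gamma$ with $p>1$), your outline, run over $t\in[0,\infty)$, does prove the equivalence.

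\textbf{Comparison with the paper.} The paper's proof consists only of your lower-bound step. It takes sequences realizing $C_{\mathrm{NJ}}(X)=2$ and shows the quotients converge to $M$, which yields $C_{\mathrm{NJ}}^p(\alpha,\beta,\gamma,X)\ge M$. It then asserts the equivalence with no upper bound and no converse. So the missing step is missing there too, and by the examples above it cannot be supplied without restricting the parameters.

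\textbf{Your lower bound needs a repair.} You put the smaller coefficient in front of $x_n+y_n$. For $\gamma\ge\beta$, $\|\beta(x_n+y_n)+(\gamma-\beta)y_n\|\ge\beta\|x_n+y_n\|-(\gamma-\beta)\to3\beta-\gamma$, which is below $\beta+\gamma$ unless $\beta=\gamma$. Symmetrically you get $3\gamma-\beta$ when $\beta>\gamma$. The proof of Theorem~\ref{the1}, which you followed, has the same slip. Use the larger coefficient instead:
\begin{itemize}
\item $\beta x_n+\gamma y_n=\gamma(x_n+y_n)-(\gamma-\beta)x_n$ if $\gamma\ge\beta$;
\item $\beta x_n+\gamma y_n=\beta(x_n+y_n)-(\beta-\gamma)y_n$ if $\beta>\gamma$;
\item for the second term, $\gamma x_n-\alpha y_n=(\gamma+\alpha)(x_n-y_n)+\gamma y_n-\alpha x_n$ gives $\|\gamma x_n-\alpha y_n\|\ge(\gamma+\alpha)\|x_n-y_n\|-\gamma-\alpha\to\gamma+\alpha$.
\end{itemize}
The smaller-coefficient decomposition you wrote is, however, exactly the right one in your (iii)$\Rightarrow$(i) step. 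There it converts $\|\beta x_n+\gamma y_n\|\to\beta+\gamma$ into $\|x_n+y_n\|\to 2$.
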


\begin{proof}
	As established in prior work (see \cite{17}), the conditions \( J(X) = 2 \) and \( C_{\mathrm{NJ}}(X) = 2 \) are equivalent. We now prove the equivalence between \( C_{\mathrm{NJ}}(X) = 2 \) and \(C_{\mathrm{NJ}}^p(\alpha, \beta, \gamma, X) = \frac{\alpha^{\frac{p}{2}}(\beta+\gamma)^p+\beta^{\frac{p}{2}}(\alpha+\gamma )^p}{2^{p-2}\left[(\alpha \beta)^{\frac{p}{2}}+\gamma^p\right]\left(\alpha^{\frac{p}{2}}+\beta^{\frac{p}{2}}\right)}  \).\\
	Suppose \( C_{\mathrm{NJ}}(X) = 2 \), with the equivalent definition of the constant $C_{\mathrm{NJ}}(X)$, this implies the existence of sequences \( \{x_n\} \subseteq S_X \) and \( \{y_n\} \subseteq B_X \) such that
	\begin{equation}\label{eq1}
		\frac{\|x_n + y_n\|^2 + \|x_n - y_n\|^2}{2(1 + \|y_n\|^2)} \to 2 \quad (n \to \infty). 
	\end{equation}
	
	For (\ref{eq1}) to hold, it is necessary that
	\begin{equation}\label{eq2}
		\|x_n + y_n\| \to 2 , \quad \|x_n - y_n\| \to 2 , \quad \text{and} \quad \|y_n\| \to 1 \quad (n \to \infty).
	\end{equation}
	Now we prove that 
	\begin{equation}\label{eq3}
		\frac{\alpha^{\frac{p}{2}}\|\beta x_n+\gamma y_n\|^p+\beta^{\frac{p}{2}}\|\gamma x_n-\alpha y_n\|^p}{2^{p-2}\left[(\alpha \beta)^{\frac{p}{2}}+\gamma^p\right]\left(\beta^{\frac{p}{2}}+\alpha^{\frac{p}{2}}\|y_n\|^p\right)} \to  \frac{\alpha^{\frac{p}{2}}(\beta+\gamma)^p+\beta^{\frac{p}{2}}(\alpha+\gamma )^p}{2^{p-2}\left[(\alpha \beta)^{\frac{p}{2}}+\gamma^p\right]\left(\alpha^{\frac{p}{2}}+\beta^{\frac{p}{2}}\right)} (n \to \infty).
	\end{equation}
	
	It is evident that the constant $C_{\mathrm{NJ}}^p(\alpha, \beta, \gamma, X)$ can be expressed in the following form
	\[C_{\mathrm{NJ}}^p(\alpha, \beta, \gamma, X)=\sup \left\{\frac{\alpha^{\frac{p}{2}}\|\beta x+\gamma y\|^p+\beta^{\frac{p}{2}}\|\gamma x-\alpha y\|^p}{2^{p-2}\left[(\alpha \beta)^{\frac{p}{2}}+\gamma^p\right]\left(\beta^{\frac{p}{2}}+\alpha^{\frac{p}{2}}\|y\|^p\right)}: x \in S_X , y \in B_X \right\}. \]
	We consider the following two cases.\\
	\textbf{Case 1:} If $\alpha = \beta = \gamma,$ then by Proposition~\ref{p1} , we have \[\frac{\alpha^{\frac{p}{2}}\|\beta x+\gamma y\|^p+\beta^{\frac{p}{2}}\|\gamma x-\alpha y\|^p}{2^{p-2}\left[(\alpha\beta)^{\frac{p}{2}}+\gamma^p\right]\left(\beta^{\frac{p}{2}}+\alpha^{\frac{p}{2}}\|y\|^p\right)}=\frac{\|x_n + y_n\|^p + \|x_n - y_n\|^p}{2^{p-1} \left(1 + \|y_n\|^p\right)} \to  2 \quad (n \to \infty).\]Thus, (\ref{eq3}) holds.\\ \textbf{Case 2:} If $\alpha \neq \beta \neq \gamma$, we now consider the situation where
	$\alpha > \beta > \gamma$ without loss of generality (the other situationes is similar).\\
	Since
	\[\begin{aligned}
		&\alpha^{\frac{p}{2}} \|\beta x_n + \gamma y_n\|^p + \beta^{\frac{p}{2}} \|\gamma x_n - \alpha y_n\|^p \\
		=& \alpha^{\frac{p}{2}} \left\| \beta (x_n + y_n) - (\beta - \gamma) y_n \right\|^p + \beta^{\frac{p}{2}} \left\| (\alpha + \gamma) (x_n - y_n) + \gamma y_n -\alpha x_n \right\|^p \\
		\geq& \alpha^{\frac{p}{2}} \left[ \beta \|x_n + y_n\| - (\beta - \gamma) \|y_n\| \right]^p + \beta^{\frac{p}{2}} \left[ (\alpha + \gamma) \|x_n - y_n\| - \gamma \|y_n\| - \alpha \|x_n\| \right]^p\end{aligned}\]
	and
	\[
	\begin{aligned}
		&\alpha^{\frac{p}{2}} \|\beta x_n + \gamma y_n\|^p + \beta^{\frac{p}{2}} \|\gamma x_n - \alpha y_n\|^p \\
		\leq& \alpha^{\frac{p}{2}} \left( \beta \|x_n\| + \gamma \|y_n\| \right)^p + \beta^{\frac{p}{2}} \left\| \gamma (x_n - y_n) + (\gamma - \alpha) y_n \right\|^p \\
		\leq& \alpha^{\frac{p}{2}} \left( \beta \|x_n \| + \gamma \| y_n\| \right)^p + \beta^{\frac{p}{2}} \left( \gamma \|x_n - y_n\| + (\alpha - \gamma) \|y_n\| \right)^p.
	\end{aligned}
	\]
	So, we have 
	\[
	\begin{aligned}
		&\alpha^{\frac{p}{2}} \left[ \beta \|x_n + y_n\| - (\beta - \gamma) \|y_n\| \right]^p + \beta^{\frac{p}{2}} \left[ (\alpha + \gamma) \|x_n - y_n\| - \gamma \|y_n\| - \alpha \|x_n\| \right]^p \\ \leq& \alpha^{\frac{p}{2}} \|\beta x_n + \gamma y_n\|^p + \beta^{\frac{p}{2}} \|\gamma x_n - \alpha y_n\|^p \\ \leq& \alpha^{\frac{p}{2}} \left( \beta \|x_n \| + \gamma \| y_n\| \right)^p + \beta^{\frac{p}{2}} \left( \gamma \|x_n - y_n\| + (\alpha - \gamma) \|y_n\| \right)^p.
	\end{aligned}
	\]
	Combine with (\ref{eq2}), we have 
	\[
	\begin{aligned}
		&\alpha^{\frac{p}{2}} \left[ \beta \|x_n + y_n\| - (\beta - \gamma) \|y_n\| \right]^p + \beta^{\frac{p}{2}} \left[ (\alpha + \gamma) \|x_n - y_n\| - \gamma \|y_n\| - \alpha \|x_n\| \right]^p \\
		\to& \alpha^{\frac{p}{2}} (\beta + \gamma)^p + \beta^{\frac{p}{2}} (\alpha + \gamma)^p \quad (n \to \infty)
	\end{aligned}
	\]
	and 
	\[
	\begin{aligned}
		&\alpha^{\frac{p}{2}} \left( \beta \|x_n \| + \gamma \| y_n\| \right)^p + \beta^{\frac{p}{2}} \left( \gamma \|x_n - y_n\| + (\alpha - \gamma) \|y_n\| \right)^p \\
		\to& \alpha^{\frac{p}{2}} (\beta + \gamma)^p + \beta^{\frac{p}{2}} (\alpha + \gamma)^p \quad (n \to \infty).
	\end{aligned}
	\]
	Thus,
	\[\alpha^{\frac{p}{2}} \|\beta x_n + \gamma y_n\|^p + \beta^{\frac{p}{2}} \|\gamma x_n - \alpha y_n\|^p \to \alpha^{\frac{p}{2}} (\beta + \gamma)^p + \beta^{\frac{p}{2}} (\alpha + \gamma)^p \quad (n \to \infty).\]
	Based on the two cases discussed above, (\ref{eq3}) holds. \\
	Thus, \( C_{\mathrm{NJ}}(X) = 2 \) iff \(C_{\mathrm{NJ}}^p(\alpha, \beta, \gamma, X) = \frac{\alpha^{\frac{p}{2}}(\beta+\gamma)^p+\beta^{\frac{p}{2}}(\alpha+\gamma )^p}{2^{p-2}\left[(\alpha \beta)^{\frac{p}{2}}+\gamma^p\right]\left(\alpha^{\frac{p}{2}}+\beta^{\frac{p}{2}}\right)}  \), as desired.
\end{proof}

Additionally, we observe that the constant  $C_{\mathrm{N J}}^p(\alpha,\beta,\gamma,X)$  exhibits a certain relation to  $J(X)$.

\begin{proposition}\label{prop1}
	Let  \(X\) be a Banach space, then we have 
	\[
	C_{\mathrm{NJ}}^p(\alpha, \beta, \gamma, X) \geq \frac{\max\left\{\left(\alpha^{\frac12}\beta J(X) - |\beta-\gamma|\right)^p,\left(\beta^{\frac12}\gamma J(X) - |\gamma-\alpha|\right)^p\right\}}{2^{p - 3} \left( \alpha^{\frac{p}{2}} \beta^{\frac{p}{2}} + \gamma^p \right) \left( \alpha^{\frac{p}{2}} + \beta^{\frac{p}{2}} \right)}.
	\]
\end{proposition}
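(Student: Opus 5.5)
The plan is to test the definition of $C_{\mathrm{NJ}}^p(\alpha,\beta,\gamma,X)$ on near-extremal pairs for $J(X)$. Both $\|x+y\|$ and $\|x-y\|$ are then close to $J(X)$, so both terms in the numerator can be bounded from below at once.

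\textbf{Step 1 (choice of test pair).} Fix $\epsilon>0$ and pick $x,y\in S_X$ with
\[
\min\{\|x+y\|,\|x-y\|\}>J(X)-\epsilon .
\]
Then $\|x+y\|>J(X)-\epsilon$ and $\|x-y\|>J(X)-\epsilon$ simultaneously. With $\|x\|=\|y\|=1$, the denominator in the definition is
\[
2^{p-2}\bigl[(\alpha\beta)^{\frac p2}+\gamma^p\bigr]\bigl(\alpha^{\frac p2}+\beta^{\frac p2}\bigr).
\]

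\textbf{Step 2 (lower bounds via the triangle inequality).} This follows the decomposition already used in Theorem~\ref{the1}. Write
\[
\beta x+\gamma y=\beta(x+y)+(\gamma-\beta)y, \qquad \gamma x-\alpha y=\gamma(x-y)+(\gamma-\alpha)y .
\]
This gives
\[
\|\beta x+\gamma y\|\ge \beta J(X)-|\beta-\gamma|-\beta\epsilon, \qquad \|\gamma x-\alpha y\|\ge \gamma J(X)-|\gamma-\alpha|-\gamma\epsilon .
\]
Whenever these right-hand sides are positive, raising to the $p$-th power and letting $\epsilon\to 0$ yields
\[
C_{\mathrm{NJ}}^p(\alpha,\beta,\gamma,X)\ \ge\ \frac{\alpha^{\frac p2}\bigl(\beta J(X)-|\beta-\gamma|\bigr)^p+\beta^{\frac p2}\bigl(\gamma J(X)-|\gamma-\alpha|\bigr)^p}{2^{p-2}\bigl[(\alpha\beta)^{\frac p2}+\gamma^p\bigr]\bigl(\alpha^{\frac p2}+\beta^{\frac p2}\bigr)} .
\]
If one of the bases is nonpositive, that term is simply dropped, since the numerator is nonnegative anyway.

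\textbf{Step 3 (reduction to the stated form; main obstacle).} Two comparisons are needed to reach the right-hand side as stated.

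\emph{Absorbing the weights.} The weights $\alpha^{p/2}$ and $\beta^{p/2}$ must be moved inside the brackets:
\[
\alpha^{\frac12}\bigl(\beta J(X)-|\beta-\gamma|\bigr)\ \ge\ \alpha^{\frac12}\beta J(X)-|\beta-\gamma| .
\]
This holds exactly when $|\beta-\gamma|(1-\alpha^{1/2})\ge 0$, and the analogous inequality for the second term requires $\beta\le 1$ (unless the corresponding difference vanishes). For other values of the parameters I would try exchanging the roles of $x$ and $y$, or rescaling the test pair, so that the correction terms appear without a root factor.

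\emph{Passing from $2^{p-2}$ to $2^{p-3}$.} This change doubles the bound, so the sum of the two numerator terms must dominate twice the larger of the stated quantities. I expect this to be the genuine difficulty. It does not follow from ``sum $\ge$ max'', and I would first check it on the equality case $\alpha=\beta=\gamma$ with $X=\ell_1$, where $J(X)=2$. There both corrections vanish, so the stated right-hand side is $\alpha^{p/2}\beta^p2^p/(2^{p-3}\cdot 2\alpha^p\cdot 2\alpha^{p/2})=4$, which exceeds the upper bound $2$ of Proposition~\ref{p1}. So the stated inequality cannot hold for every choice of parameters.

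My plan is therefore to prove the Step 2 bound rigorously. I would then derive the stated form only under the parameter restrictions for which the two comparisons in Step 3 are valid, and record the normalisation $2^{p-3}$ as the point requiring correction. The corrected denominator should be $2^{p-2}$ together with the factor-of-two loss from taking the max.
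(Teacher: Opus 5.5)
Your Step~2 bound is correct, and you are right that the printed inequality cannot be proved for all parameters. However, the witness you give is miscomputed, so the proposal does not establish its own conclusion. For $\alpha=\beta=\gamma$ the right-hand side is $(\alpha^{3/2}J(X))^p/(2^{p-3}\cdot 2\alpha^p\cdot 2\alpha^{p/2})=J(X)^p/2^{p-1}$. When $J(X)=2$ this equals $2$, not $4$. Since $C_{\mathrm{NJ}}^p(\alpha,\alpha,\alpha,\ell_1)=2$ (take $x=(1,0)$, $y=(0,1)$), equality holds there. In fact the symmetric case is always valid: it reduces to $C_{\mathrm{NJ}}^p(X)\ge J(X)^p/2^{p-1}$, which follows from near-extremal pairs for $J(X)$.

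The genuine counterexamples come from the weight step you flagged, and your plan to repair it by swapping $x,y$ or rescaling cannot work. In an inner product space $H$ with $p=2$ one has $\alpha\|\beta x+\gamma y\|^2+\beta\|\gamma x-\alpha y\|^2=(\alpha\beta+\gamma^2)(\beta\|x\|^2+\alpha\|y\|^2)$ identically. Hence $C_{\mathrm{NJ}}^2(\alpha,\beta,\gamma,H)=1$, while $J(H)=\sqrt2$. For $(\alpha,\beta,\gamma)=(1/8,8,1)$:
\begin{itemize}
\item the entries of the max are $(4-7)^2=9$ and $(4-7/8)^2=625/64$;
\item the denominator is $2^{-1}\cdot 2\cdot 65/8=65/8$;
\item so the right-hand side is $125/104>1$.
\end{itemize}
Worse, take $p=2$, $\beta=18$, $\gamma=1$. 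In every Banach space (using $J(X)\ge\sqrt2$) the second entry alone gives a right-hand side at least $2(5+\alpha)^2/((18\alpha+1)(\alpha+18))$. This tends to $25/9>2$ as $\alpha\to0^+$, contradicting the upper bound $2$ of Proposition~\ref{p1}.

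Your suspicion about $2^{p-3}$ is also justified, but it needs a correct witness. Use properly weighted entries $\alpha^{p/2}(\beta J(X)-|\beta-\gamma|)^p$ and $\beta^{p/2}(\gamma J(X)-|\gamma-\alpha|)^p$. Taking $p=1$, $(\alpha,\beta,\gamma)=(1/4,1,1/4)$ and $J(X)=2$ (e.g.\ $X=\ell_1$) gives $(5/8)/(9/32)=20/9>2$.

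The paper's proof makes precisely these two moves.
\begin{itemize}
\item It starts from $\min\{\alpha^{1/2}\|\beta x+\gamma y\|,\beta^{1/2}\|\gamma x-\alpha y\|\}\le[2^{p-3}((\alpha\beta)^{p/2}+\gamma^p)(\alpha^{p/2}+\beta^{p/2})\,C_{\mathrm{NJ}}^p(\alpha,\beta,\gamma,X)]^{1/p}$. From this it infers a bound on each entry separately, hence the max, although only the smaller of the two lower bounds is controlled.
\item It replaces $\alpha^{1/2}(\beta J(X)-|\beta-\gamma|)$ by $\alpha^{1/2}\beta J(X)-|\beta-\gamma|$, which enlarges the term when $\alpha>1$ (likewise for $\beta$).
\end{itemize}
What is actually true is your Step~2 bound. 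With the properly weighted entries, it yields the version with $\min$ and $2^{p-3}$, and the version with $\max$ and $2^{p-2}$.
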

\begin{proof}
	For any $x,y \in S_X$, we have 
	\[
	\begin{aligned}
		& 2 \min \left\{ \alpha^{\frac{1}{2}} \|\beta x + \gamma y\|, \beta^{\frac{1}{2}} \|\gamma x - \alpha y\| \right\}^p \\ \leq& \alpha^{\frac{p}{2}} \|\beta x + \gamma y\|^p + \beta^{\frac{p}{2}} \|\gamma x - \alpha y\|^p \\
		\leq& 2^{p - 2} \left( \alpha^{\frac{p}{2}} \beta^{\frac{p}{2}} + \gamma^p \right) \left( \alpha^{\frac{p}{2}} +\beta^{\frac{p}{2}} \right) C_{\mathrm{N J}}^p(\alpha,\beta,\gamma,X), \\
	\end{aligned}
	\]
	so
	\[
	\min \left\{ \alpha^{\frac{1}{2}} \|\beta x + \gamma y\|, \beta^{\frac{1}{2}} \|\gamma x - \alpha y\| \right\} \leq \left[ 2^{p - 3} \left( \alpha^{\frac{p}{2}} \beta^{\frac{p}{2}} + \gamma^p \right) \left( \alpha^{\frac{p}{2}} + \beta^{\frac{p}{2}} \right) C_{\mathrm{N J}}^p(\alpha,\beta,\gamma,X) \right]^{\frac{1}{p}} .
	\]
	Then, we can obtain
	\[
	\begin{aligned}
		& \left[ 2^{p - 3} \left( \alpha^{\frac{p}{2}} \beta^{\frac{p}{2}} + \gamma^p \right) \left( \alpha^{\frac{p}{2}} + \beta^{\frac{p}{2}} \right) C_{\mathrm{N J}}^p(\alpha,\beta,\gamma,X) \right]^{\frac{1}{p}} 
		\\\geq& \min \left\{ \alpha^{\frac{1}{2}} \|\beta x + \gamma y\|, \beta^{\frac{1}{2}} \|\gamma x - \alpha y\| \right\} \\
		\geq& \min \left\{ \alpha^{\frac{1}{2}} (\beta \|x + y\| - |\beta - \gamma|), \beta^{\frac{1}{2}} (\gamma \|x - y\| - |\gamma - \alpha|) \right\}.
	\end{aligned} 
	\]
	Thus, 
	\[
	C_{\mathrm{NJ}}^p(\alpha, \beta, \gamma, X) \geq \frac{\left( \alpha^{\frac{1}{2}} \beta J(X) - |\beta - \gamma| \right)^p}{2^{p - 3} \left( \alpha^{\frac{p}{2}} \beta^{\frac{p}{2}} + \gamma^p \right) \left( \alpha^{\frac{p}{2}} + \beta^{\frac{p}{2}} \right)}.
	\]
	Similarly,
	\[
	C_{\mathrm{NJ}}^p(\alpha, \beta, \gamma, X) \geq \frac{\left( \beta^{\frac{1}{2}} \gamma J(X) - |\gamma - \alpha| \right)^p}{2^{p - 3} \left( \alpha^{\frac{p}{2}} \beta^{\frac{p}{2}} + \gamma^p \right) \left( \alpha^{\frac{p}{2}} + \beta^{\frac{p}{2}} \right)},
	\]
	as desired.
\end{proof}

Now, by combining Proposition~\ref{prop1} with the result that a Banach space $X$
is uniformly non-square if and only if $J(X)<2$ \cite{07}, we can obtain a straightforward corollary.

\begin{corollary}\label{cor1}
	Let $X$ be a Banach space, if \[ C_{\mathrm{NJ}}^p(\alpha, \beta, \gamma, X) < \frac{\max\left\{\left(2\alpha^{\frac12}\beta  - |\beta-\gamma|\right)^p,\left(2\beta^{\frac12}\gamma - |\gamma-\alpha|\right)^p\right\}}{2^{p - 3} \left( \alpha^{\frac{p}{2}} \beta^{\frac{p}{2}} + \gamma^p \right) \left( \alpha^{\frac{p}{2}} + \beta^{\frac{p}{2}} \right)}\] holds for some $\alpha,\beta,\gamma>0$, then $X$ is uniformly non-square.
\end{corollary}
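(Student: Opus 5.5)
The plan is a proof by contraposition using Proposition~\ref{prop1} together with the characterization that $X$ is uniformly non-square if and only if $J(X)<2$ (\cite{07}). Suppose $X$ is not uniformly non-square. Then $J(X)=2$, since $J(X)\le 2$ always holds by Property (i) and $J(X)<2$ is excluded. The aim is to show that $C_{\mathrm{NJ}}^p(\alpha,\beta,\gamma,X)$ is then at least the right-hand side of the hypothesis for every $\alpha,\beta,\gamma>0$, contradicting the assumed strict inequality.

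\textbf{Step 1.} Substitute $J(X)=2$ into the lower bound of Proposition~\ref{prop1}. This gives
\[
C_{\mathrm{NJ}}^p(\alpha,\beta,\gamma,X)\geq \frac{\max\left\{\left(2\alpha^{\frac12}\beta-|\beta-\gamma|\right)^p,\left(2\beta^{\frac12}\gamma-|\gamma-\alpha|\right)^p\right\}}{2^{p-3}\left(\alpha^{\frac p2}\beta^{\frac p2}+\gamma^p\right)\left(\alpha^{\frac p2}+\beta^{\frac p2}\right)},
\]
which is exactly the quantity in the hypothesis of Corollary~\ref{cor1}. This holds for every choice of parameters, so in particular for the $\alpha,\beta,\gamma$ in the hypothesis, and it contradicts the strict inequality assumed there. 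Hence $J(X)<2$, and $X$ is uniformly non-square.

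\textbf{Main obstacle.} The only delicate point is how Proposition~\ref{prop1} is used. Its bound comes from inequalities of the form $\alpha^{\frac12}\|\beta x+\gamma y\|\geq \alpha^{\frac12}(\beta\|x+y\|-|\beta-\gamma|)$. Before passing to the supremum over $x,y\in S_X$ and taking $p$-th powers, one must make sure the base is nonnegative, or else replace it by its positive part; the $p$-th power is monotone only on $[0,\infty)$.

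So I would reread the proof of Proposition~\ref{prop1} in the form where $J(X)$ is approximated by a sequence $x_n,y_n\in S_X$ with $\min\{\|x_n+y_n\|,\|x_n-y_n\|\}\to J(X)=2$, so that both $\|x_n+y_n\|$ and $\|x_n-y_n\|$ tend to $2$. Passing to the limit in the two estimates then yields each term of the maximum separately. Each term is used only when its base is positive, and in every case the right-hand side is dominated by $C_{\mathrm{NJ}}^p(\alpha,\beta,\gamma,X)$. With this check in place, the contraposition argument above completes the proof.
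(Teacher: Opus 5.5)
Your route is exactly the paper's (contraposition, $J(X)=2$, substitution into Proposition~\ref{prop1}), but neither version can be repaired: Proposition~\ref{prop1}, and with it the corollary as stated, is false. Take $p=1$, $\alpha=1$, $\beta=\gamma=4$. Proposition~\ref{p1} gives $2\le C_{\mathrm{NJ}}^1(\alpha,\beta,\gamma,X)\le 2$ for every $X$. The right-hand side of the hypothesis, however, equals $\max\{8,13\}/(2^{-2}\cdot 6\cdot 3)=26/9>2$. So the hypothesis holds in every Banach space, including $(\mathbb{R}^2,\|\cdot\|_1)$, which has $J=2$ and is not uniformly non-square. This is not a $p=1$ artifact: with $p=2$ and the same parameters the right-hand side is $169/50>2\ge C_{\mathrm{NJ}}^2(\alpha,\beta,\gamma,X)$.

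The failure is in the step you flagged and then passed over: ``passing to the limit in the two estimates yields each term of the maximum separately''. Take $x_n,y_n\in S_X$ with $\|x_n\pm y_n\|\to 2$. Your two estimates then only bound the sum:
\[
\alpha^{\frac p2}\bigl(2\beta-|\beta-\gamma|\bigr)_+^p+\beta^{\frac p2}\bigl(2\gamma-|\gamma-\alpha|\bigr)_+^p\le 2^{p-2}\bigl((\alpha\beta)^{\frac p2}+\gamma^p\bigr)\bigl(\alpha^{\frac p2}+\beta^{\frac p2}\bigr)C_{\mathrm{NJ}}^p(\alpha,\beta,\gamma,X).
\]
Dropping one summand leaves the factor $2^{p-2}$, not $2^{p-3}$. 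The $2^{p-3}$ you get from $2\min\{\cdot\}^p\le$ sum controls only the smaller of the two terms, i.e.\ a minimum, not a maximum. Also, the weight $\alpha^{1/2}$ (resp.\ $\beta^{1/2}$) multiplies $|\beta-\gamma|$ (resp.\ $|\gamma-\alpha|$) as well. So the correct base is $\alpha^{1/2}(2\beta-|\beta-\gamma|)$, not $2\alpha^{1/2}\beta-|\beta-\gamma|$. The sign issue you worried about is minor by comparison; positive parts handle it.

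If you want a true statement of this type, use norming functionals of $x_n+y_n$ and of $x_n-y_n$. They show $\|\beta x_n+\gamma y_n\|\to\beta+\gamma$ and $\|\gamma x_n-\alpha y_n\|\to\alpha+\gamma$. Hence $X$ is uniformly non-square whenever $C_{\mathrm{NJ}}^p(\alpha,\beta,\gamma,X)<\frac{\alpha^{p/2}(\beta+\gamma)^p+\beta^{p/2}(\alpha+\gamma)^p}{2^{p-2}[(\alpha\beta)^{p/2}+\gamma^p](\alpha^{p/2}+\beta^{p/2})}$. This threshold is always at most $2$, as it must be.
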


\begin{proof}
	Assume that $X$ is not uniformly non-square. By \cite{07}, we have $J(X) \ge 2$. Conbine with Proposition~\ref{prop1}, we have 
	\[ C_{\mathrm{NJ}}^p(\alpha, \beta, \gamma, X) \geq \frac{\max\left\{\left(2\alpha^{\frac12}\beta  - |\beta-\gamma|\right)^p,\left(2\beta^{\frac12}\gamma - |\gamma-\alpha|\right)^p\right\}}{2^{p - 3} \left( \alpha^{\frac{p}{2}} \beta^{\frac{p}{2}} + \gamma^p \right) \left( \alpha^{\frac{p}{2}} + \beta^{\frac{p}{2}} \right)}\]
	which contradicts the condition of Corollary~\ref{cor1}.
\end{proof}
\section{The constant $C_{\mathrm{NJ}}^p(\alpha, \beta, \gamma, X)$ and uniform normal structure}

In this part, we establish a sufficient condition for uniform normal structure in terms of the constant  $C_{\mathrm{NJ}}^p(\alpha,\beta,\gamma,X)$ . 

\begin{lemma}\label{le2}
	Let \( X \) be a Banach space and \( \widetilde{X} \) be its ultrapower coincide. Then 
	$C_{\mathrm{NJ}}^p(\alpha, \beta, \gamma, X) = C_{\mathrm{NJ}}^p(\alpha, \beta, \gamma, \widetilde{X})$ for all \( \alpha, \beta, \gamma > 0 \) and \( p \geq 1 \).
\end{lemma}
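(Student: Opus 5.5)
We prove the two inequalities $C_{\mathrm{NJ}}^p(\alpha,\beta,\gamma,X)\le C_{\mathrm{NJ}}^p(\alpha,\beta,\gamma,\widetilde{X})$ and the reverse one separately. Throughout we write the ratio in the definition as
\[
F(x,y)=\frac{\alpha^{\frac{p}{2}}\|\beta x+\gamma y\|^p+\beta^{\frac{p}{2}}\|\gamma x-\alpha y\|^p}{2^{p-2}\left[(\alpha \beta)^{\frac{p}{2}}+\gamma^p\right]\left(\beta^{\frac{p}{2}}\|x\|^p+\alpha^{\frac{p}{2}}\|y\|^p\right)}.
\]
For the first inequality we use the canonical isometric embedding $J\colon X\to\widetilde{X}$, $x\mapsto(x,x,x,\dots)+N_{\mathcal U}$. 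It is linear and norm preserving, because $\lim_{\mathcal U}\|x\|=\|x\|$. Hence $F(Jx,Jy)=F(x,y)$ for every $(x,y)\neq(0,0)$, and taking the supremum gives the first inequality at once.

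For the reverse inequality, take $\tilde x=(x_i)_{\mathcal U}$ and $\tilde y=(y_i)_{\mathcal U}$ in $\widetilde X$ with $(\tilde x,\tilde y)\neq(0,0)$, and fix representatives $(x_i),(y_i)\in l_\infty(I,X)$. The quotient norm satisfies $\|(z_i)_{\mathcal U}\|=\lim_{\mathcal U}\|z_i\|$, and the operations are coordinatewise. Therefore
\[
\|\beta\tilde x+\gamma\tilde y\|=\lim_{\mathcal U}\|\beta x_i+\gamma y_i\|,\qquad \|\gamma\tilde x-\alpha\tilde y\|=\lim_{\mathcal U}\|\gamma x_i-\alpha y_i\|,
\]
and similarly for $\|\tilde x\|$ and $\|\tilde y\|$. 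The maps $t\mapsto t^p$, sums and products are continuous, so limits along $\mathcal U$ pass through them. Thus the numerator and the denominator of $F(\tilde x,\tilde y)$ are the $\mathcal U$-limits of the numerators and denominators of $F(x_i,y_i)$.

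The main step is the division. The denominator limit $D=2^{p-2}[(\alpha\beta)^{p/2}+\gamma^p](\beta^{p/2}\|\tilde x\|^p+\alpha^{p/2}\|\tilde y\|^p)$ is strictly positive because $(\tilde x,\tilde y)\neq(0,0)$. Consequently the set $A=\{i: (x_i,y_i)\neq(0,0)\}$ belongs to $\mathcal U$. For $i\in A$ we have
\[
\text{numerator}_i\le C_{\mathrm{NJ}}^p(\alpha,\beta,\gamma,X)\cdot\text{denominator}_i ,
\]
and for $i\notin A$ both sides vanish, so this holds for all $i$. Taking $\lim_{\mathcal U}$ of both sides yields $\text{numerator}\le C_{\mathrm{NJ}}^p(\alpha,\beta,\gamma,X)\,D$. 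Dividing by $D>0$ gives $F(\tilde x,\tilde y)\le C_{\mathrm{NJ}}^p(\alpha,\beta,\gamma,X)$.

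Taking the supremum over all $(\tilde x,\tilde y)\neq(0,0)$ in $\widetilde X$ gives the reverse inequality, and combining the two proves the lemma. The only delicate point is the positivity of $D$, which is what justifies passing the quotient to the limit. Working with the multiplied-out inequality avoids dividing by terms that vanish for some indices.
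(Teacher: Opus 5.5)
Your proof is correct and follows essentially the same route as the paper: the canonical isometric embedding gives $C_{\mathrm{NJ}}^p(\alpha,\beta,\gamma,X)\le C_{\mathrm{NJ}}^p(\alpha,\beta,\gamma,\widetilde{X})$, and for the reverse inequality both arguments represent $\tilde x,\tilde y$ by bounded families and pass to the $\mathcal{U}$-limit coordinatewise. Your version is cleaner than the paper's: you pass the multiplied-out inequality $\text{numerator}_i\le C_{\mathrm{NJ}}^p(\alpha,\beta,\gamma,X)\cdot\text{denominator}_i$ to the limit and only then divide by $D>0$, whereas the paper takes the $\mathcal{U}$-limit of the ratios $a_n$ directly without checking they are defined, and also misdescribes $\widetilde{X}$ as the completion of $X$.
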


\begin{proof}
	First, it is clear that \( C_{\mathrm{NJ}}^p(\alpha, \beta, \gamma, X) \leq C_{\mathrm{NJ}}^p(\alpha, \beta, \gamma, \widetilde{X}) \). We now show \( C_{\mathrm{NJ}}^p(\alpha, \beta, \gamma, X) \geq C_{\mathrm{NJ}}^p(\alpha, \beta, \gamma, \widetilde{X}) \).
	
	Let \( \delta > 0 \), and take \( \widetilde{x}, \widetilde{y} \in \widetilde{X} \) with \( (\widetilde{x}, \widetilde{y}) \neq (0, 0) \). Without loss of generality, assume \( \widetilde{x} \neq 0 \) (the case \( \widetilde{y} \neq 0 \) is similar). Choose \( \varepsilon > 0 \) such that \( \varepsilon < \delta \|\widetilde{x}\| \).
	
	Because of \( \widetilde{X} \) is the completion of \( X \), there exist sequences \( \{x_n\}, \{y_n\} \subset X \) such that \( x_n \to \widetilde{x} \) and \( y_n \to \widetilde{y} \).
	Since
	\[
	\begin{aligned}
		a :=& \frac{\alpha^p\|\beta \widetilde{x} + \gamma \widetilde{y}\|^p + \beta^{\frac{p}{2}}\|\gamma \widetilde{x} - \alpha \widetilde{y}\|^p}{2^{p-2}\left[(\alpha\beta)^{\frac{p}{2}} + \gamma^p\right]\left(\beta^{\frac{p}{2}}\|\widetilde{x}\|^p + \alpha^{\frac{p}{2}}\|\widetilde{y}\|^p\right)}\\
		=&\lim_\mathcal{U}\frac{\alpha^p\|\beta x_n + \gamma y_n\|^p + \beta^{\frac{p}{2}}\|\gamma x_n - \alpha y_n\|^p}{2^{p-2}\left[(\alpha\beta)^{\frac{p}{2}} + \gamma^p\right]\left(\beta^{\frac{p}{2}}\|x_n\|^p + \alpha^{\frac{p}{2}}\|y_n\|^p\right)}\\
		:=&\lim_\mathcal{U}a_n,
	\end{aligned}		
	\]
	the set \( \{ n \in \mathbb{N} : |a_n - a| < \delta \text{ and } \|x_n - \widetilde{x}\| < \varepsilon, \|y_n - \widetilde{y}\| < \varepsilon \} \)  belongs $\mathcal{U}$. In particular, for some \( n \), we have
	\[a < a_n + \delta \leq C_{\mathrm{NJ}}^p(\alpha, \beta, \gamma, X) + \delta.\]
	
	Then, we have \( C_{\mathrm{NJ}}^p(\alpha, \beta, \gamma, \widetilde{X}) \leq C_{\mathrm{NJ}}^p(\alpha, \beta, \gamma, X) \).
\end{proof}

In \cite{40}, we have
a Banach space \( X \) has uniform normal structure if there exists some \( t \in (0, 1] \) such that the inequality $ C_{\mathrm{NJ}}^{(p)}(X) < \frac{(\sqrt{4 + t^2} + t)^p}{2^{2p - 2}(1 + t^p)}$ holds for $p \geq 1$.

The Theorem~\ref{t6} covers this result by setting $\alpha=\beta=\gamma$, under which our generalized constant $C_{\mathrm{NJ}}^p(\alpha, \alpha, \alpha, X)$ reduces to
the classical $p$-th von Neumann--Jordan constant $C_{\mathrm{NJ}}^{(p)}(X)$, and the right-
hand side of the inequality in Theorem\cite{40} simplifies exactly to the bound given in \cite{40}.
Thus, the condition for uniform normal structure in \cite{40} is a special case of Theorem~\ref{t6},
demonstrating that our result is more general.

\begin{theorem}\label{t6}
	Let \(\alpha \geq \gamma \geq \alpha t\), \(X\) be a Banach space and the inequality
	\[
	\begin{aligned}
		&C_{\mathrm{N J}}^p(\alpha,\beta,\gamma,X)\\ <& \frac{\alpha^{\frac{p}{2}}\gamma^p \left( \beta\sqrt{4\alpha^2 + \gamma^2 t^2} + \beta\gamma t \right)^p + \alpha^p \beta^{\frac{p}{2}} \left( \beta\sqrt{4\alpha^2 + \gamma^2 t^2} - 2\alpha\beta + \beta\gamma t + 2\gamma^2 \right)^p}{2^{2p-2} \alpha^p \gamma^p \left[ \alpha^{\frac{p}{2}} \left( \beta^p + \gamma^p t^p \right) + \beta^{\frac{p}{2}} \left( \gamma^p + \alpha^p t^p \right) \right]}
	\end{aligned}
	\]
	holds for some \(t \in (0,1]\) and $p \geq 1$. Then \(X\) has uniform normal structure.
\end{theorem}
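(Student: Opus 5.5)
The plan is the standard ultrapower argument for uniform normal structure, following the scheme used in \cite{40} for $C_{\mathrm{NJ}}^{(p)}(X)$.

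First, the hypothesis forces uniform non-squareness, hence reflexivity. The right-hand side of the inequality is strictly smaller than the value $\frac{\alpha^{p/2}(\beta+\gamma)^p+\beta^{p/2}(\alpha+\gamma)^p}{2^{p-2}[(\alpha\beta)^{p/2}+\gamma^p](\alpha^{p/2}+\beta^{p/2})}$ that, by the equivalence theorem above, characterizes $J(X)=2$. This comparison has to be checked algebraically. The cleanest route is to note that the configuration below, with $\widetilde{y}$ replaced by a norm-one point making both sums of length two, would give that value. Hence $J(X)<2$, so $X$ is uniformly non-square and therefore reflexive. Since $X$ is reflexive, uniform normal structure follows from weak normal structure of the ultrapower. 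By the known criterion (Saejung, or Gao--Lau style), it suffices to show: there are no $\widetilde{x},\widetilde{y}\in S_{\widetilde X}$ of the form produced by the "non-normal" lemma. Concretely, if $X$ failed uniform normal structure, then for every $t\in(0,1]$ there would exist $\widetilde{x}_1,\widetilde{x}_2,\widetilde{x}_3\in S_{\widetilde X}$ and $\widetilde{f}\in S_{\widetilde X^*}$ with $\|\widetilde x_i-\widetilde x_j\|=1$ for $i\neq j$, $\widetilde f(\widetilde x_1)=\widetilde f(\widetilde x_2)$, and $\|\widetilde x_3\pm(\widetilde x_2-\widetilde x_1)\|$ controlled. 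I would then use Lemma~\ref{le2}, which gives $C_{\mathrm{NJ}}^p(\alpha,\beta,\gamma,X)=C_{\mathrm{NJ}}^p(\alpha,\beta,\gamma,\widetilde X)$, to transfer any lower estimate computed in $\widetilde X$ back to $X$.

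The core step chooses test vectors in $\widetilde X$ adapted to the parameters. Following \cite{40}, set $\widetilde x=\widetilde x_1-\widetilde x_2$ and $\widetilde y=t\,\widetilde x_3$ (scaled), or an analogous combination weighted by $\gamma/\alpha$. The condition $\alpha\ge\gamma\ge\alpha t$ is used so that the coefficients in the triangle-inequality estimates keep the correct sign. Using the functional $\widetilde f$ I will bound $\|\beta\widetilde x+\gamma\widetilde y\|$ and $\|\gamma\widetilde x-\alpha\widetilde y\|$ from below. The quantity $\sqrt{4\alpha^2+\gamma^2t^2}$ should arise from optimizing an auxiliary scalar, namely the choice of the point $\widetilde x_3$ scaled by $\lambda$ with $\lambda$ solving a quadratic. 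This yields lower bounds of the forms $\frac{\beta(\sqrt{4\alpha^2+\gamma^2t^2}+\gamma t)}{2\alpha}$ and $\frac{\beta\sqrt{4\alpha^2+\gamma^2t^2}-2\alpha\beta+\beta\gamma t+2\gamma^2}{2\gamma}$. Each $\widetilde x$, $\widetilde y$ has norm $1$ or $t$. Plugging these into the defining quotient with $\|\widetilde x\|=1$ and $\|\widetilde y\|=t$ gives exactly the displayed right-hand side as a lower bound for $C_{\mathrm{NJ}}^p(\alpha,\beta,\gamma,\widetilde X)=C_{\mathrm{NJ}}^p(\alpha,\beta,\gamma,X)$, contradicting the hypothesis.

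The main obstacle is the second lower bound, for $\|\gamma\widetilde x-\alpha\widetilde y\|$. Its expression contains the term $-2\alpha\beta+2\gamma^2$, which suggests a decomposition such as $\gamma\widetilde x-\alpha\widetilde y=\frac{\gamma}{\beta}(\beta\widetilde x+\gamma\widetilde y)-\frac{\gamma^2+\alpha\beta}{\beta}\widetilde y$ combined with the first bound and the reverse triangle inequality. I would try that decomposition first, checking that $\alpha\ge\gamma\ge\alpha t$ makes it nonnegative. As a fallback, I would evaluate $\widetilde f$ on a suitably chosen vector to get the estimate directly. Finally, I would check the specialization $\alpha=\beta=\gamma$, which recovers $\frac{(\sqrt{4+t^2}+t)^p}{2^{2p-2}(1+t^p)}$ from \cite{40}, as a consistency test of the chosen vectors.
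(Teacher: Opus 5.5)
\textbf{Step 1 fails.} The comparison you defer in your first step is false for admissible parameters. Take $p=2$, $\alpha=\gamma=t=1$, $\beta=10$, so that $\alpha\ge\gamma\ge\alpha t$ holds. The right-hand side of the hypothesis is then
\[
\frac{(600+200\sqrt5)+10(564-160\sqrt5)}{484}=\frac{1560-350\sqrt5}{121}\approx 6.42 .
\]
The value $V=\frac{\alpha^{p/2}(\beta+\gamma)^p+\beta^{p/2}(\alpha+\gamma)^p}{2^{p-2}[(\alpha\beta)^{p/2}+\gamma^p](\alpha^{p/2}+\beta^{p/2})}$ that you want to lie above it is only $161/121$. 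Moreover, Proposition~\ref{p1} gives $C_{\mathrm{NJ}}^p(\alpha,\beta,\gamma,X)\le 2$ for every $X$. So for these parameters the hypothesis holds for every Banach space, including $c_0$, where $J(c_0)=2$ and normal structure fails. No algebraic check can extract $J(X)<2$ here, and the statement itself needs extra restrictions on $(\alpha,\beta,\gamma,t)$. The paper's one-line appeal to Corollary~\ref{cor1} at this point tacitly needs the same kind of comparison, and it fails here too. Also, reflexivity is not what the ultrapower argument uses. You need super-reflexivity, which uniform non-squareness gives, because that is what makes $\widetilde X$ reflexive.

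\textbf{The core mechanism is missing.} Even where the parameters are harmless, the paper relies on Lemma~2 of \cite{16}. It gives $\tilde x_1,\tilde x_2,\tilde x_3\in S_{\widetilde X}$ and \emph{three} functionals $\tilde f_1,\tilde f_2,\tilde f_3\in S_{\widetilde X^*}$ with $\|\tilde x_i-\tilde x_j\|=1$ and $\tilde f_i(\tilde x_j)=0$ for $i\neq j$, $\tilde f_i(\tilde x_i)=1$, and $\|\tilde x_3-(\tilde x_1+\tilde x_2)\|\ge\|\tilde x_1+\tilde x_2\|$. A single $\tilde f$ with $\tilde f(\tilde x_1)=\tilde f(\tilde x_2)$ leaves you nothing to evaluate.

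There is also no single choice of test vectors. The square root enters through the threshold $g(t)=\frac{\sqrt{4\alpha^2+\gamma^2t^2}+2\alpha-\gamma t}{2\beta}$, and one splits into three cases:
\begin{enumerate}
\item[(a)] $\|\tilde x_1+\tilde x_2\|\le g(t)$: take $\tilde x=\tilde x_1-\tilde x_2$ and $\tilde y=(\tilde x_1+\tilde x_2)/g(t)$.
\item[(b)] $\|\tilde x_3+\tilde x_2-\tilde x_1\|\le g(t)$: take $\tilde x=\tilde x_2-\tilde x_3$ and $\tilde y=(\tilde x_3+\tilde x_2-\tilde x_1)/g(t)$.
\item[(c)] Otherwise: take $\tilde x=\tilde x_3-\tilde x_1$ and $\tilde y=\tilde x_2$.
\end{enumerate}
In cases (a) and (b), evaluating the appropriate $\tilde f_i$ gives directly
\[
\|\beta\tilde x+\gamma t\tilde y\|\ge\beta+\gamma t/g(t)=\frac{\beta(\sqrt{4\alpha^2+\gamma^2t^2}+\gamma t)}{2\alpha}, \qquad \|\gamma\tilde x-\alpha t\tilde y\|\ge\gamma+\alpha t/g(t).
\]
The second of these is exactly your second target quantity. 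So your ``main obstacle'' is handled by a functional. By contrast, the reverse-triangle decomposition you propose is lossy: for $\alpha=\beta=\gamma=1$ it yields only $\frac{\sqrt{4+t^2}+t}{2}-2t$ instead of $\frac{\sqrt{4+t^2}+t}{2}$.

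Finally, case (c) only yields bounds of the form $\beta(g(t)-1)+\gamma t$ and $\gamma(g(t)-1)+\alpha t$. These coincide with the quantities from (a) and (b) when $\alpha=\beta=\gamma$, but not in general, and the contradiction needs the \emph{minimum} over the cases. In the example above $g(t)<1$. Since $\tilde f_1(\tilde x_1+\tilde x_2)=\tilde f_2(\tilde x_3+\tilde x_2-\tilde x_1)=1$, cases (a) and (b) cannot occur, and the bound from case (c) is negative.
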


\begin{proof}
	Combine Corollary \ref{cor1}, it is easy to see that \(X\) is uniformly non-square, hence \(X\) is super-reflexive (see \cite{15}). Therefore, it is sufficient to prove that \(X\) has normal structure. Applying Lemma 2 in \cite{16}, there exist \(\tilde{x}_{1}, \tilde{x}_{2}, \tilde{x}_{3} \in S_{\tilde{X}}\) and \(\tilde{f}_{1}, \tilde{f}_{2}, \tilde{f}_{3} \in S_{\tilde{X}^{*}}\) satisfying the following three properties:
	\begin{enumerate}
		\item[(i)] \(\|\tilde{x}_{i} - \tilde{x}_{j}\| = 1\) and \(\tilde{f}_{i}(\tilde{x}_{j}) = 0\) for all \(i \neq j\),
		\item[(ii)] \(\tilde{f}_{i}(\tilde{x}_{i}) = 1\) for \(i = 1, 2, 3\),
		\item[(iii)] \(\|\tilde{x}_{3} - (\tilde{x}_{2} + \tilde{x}_{1})\| \geq \|\tilde{x}_{2} + \tilde{x}_{1}\|\).
	\end{enumerate}
	
	Next, let 
	\[
	g(t) = \frac{\sqrt{4\alpha^{2} + \gamma^{2}t^{2}} + 2\alpha - \gamma t}{2\beta},  \quad t\in(0,1]
	\]
	and consider three possible cases.
	
	\textbf{Case 1:} If \(\|\tilde{x}_{1} + \tilde{x}_{2}\| \leq g(t)\). Let \(\tilde{x} = \tilde{x}_{1} - \tilde{x}_{2}\) and \(\tilde{g} = \dfrac{\tilde{x}_{1} + \tilde{x}_{2}}{g(t)}\). Then \(\tilde{x}, \tilde{g} \in B_{\tilde{X}}\), we have
	\[
	\begin{aligned}
		& \|\beta \tilde{x} + \gamma t \tilde{y}\| = \left\| \left( \beta + \frac{\gamma t}{g(t)} \right) \tilde{x_1} - \left( \beta - \frac{\gamma t}{g(t)} \right) \tilde{x_2} \right\| \\
		\geq& \left( \beta + \frac{\gamma t}{g(t)} \right) f_1(\tilde{x_1}) - \left| \beta - \frac{\gamma t}{g(t)} \right| f_1(\tilde{x_2}) \\
		=& \beta + \frac{\gamma t}{g(t)} 
	\end{aligned}
	\]
	and
	\[
	\begin{aligned}
		& \|\gamma \tilde{x} - \alpha t \tilde{y}\| = \left\| \left( \gamma - \frac{\alpha t}{g(t)} \right) \tilde{x_1} - \left( \gamma + \frac{\alpha t}{g(t)} \right) \tilde{x_2} \right\| \\
		\geq& \left( \gamma + \frac{\alpha t}{g(t)} \right) f_2(\tilde{x_2}) - \left| \gamma - \frac{\alpha t}{g(t)} \right| f_2(\tilde{x_1}) \\
		=& \gamma + \frac{\alpha t}{g(t)}.
	\end{aligned}
	\]
	\textbf{Case 2:} If $\|\tilde{x}_{1}+\tilde{x}_{2}\|\geq g(t)$ and $\|\tilde{x}_{3}+\tilde{x}_{2}-\tilde{x}_{1}\|\leq g(t)$. Let $\tilde{x}=\tilde{x}_{2}-\tilde{x}_{3}$ and $\tilde{y}=(\tilde{x}_{3}+\tilde{x}_{2}-\tilde{x}_{1})$. Then $\tilde{x},\tilde{y}\in B_{\tilde{X}}$, we have
	\[
	\begin{aligned}
		& \|\beta \tilde{x} + \gamma t \tilde{y}\| = \left\| \left( \beta + \frac{\gamma t}{g(t)} \right) \tilde{x_1} - \left( \beta - \frac{\gamma t}{g(t)} \right) \tilde{x_3} - \frac{\gamma t}{g(t)} \tilde{x_1} \right\| \\
		\geq& \left( \beta + \frac{\gamma t}{g(t)} \right) \tilde{f}_2(\tilde{x_1}) - \left( \beta - \frac{\gamma t}{g(t)} \right) \tilde{f}_2(\tilde{x_3}) - \frac{\gamma t}{g(t)} \tilde{f}_2(\tilde{x_1}) \\
		=& \beta + \frac{\gamma t}{g(t)} 
	\end{aligned}
	\]
	and
	\[
	\begin{aligned}
		\|\gamma\tilde{x}-\alpha t\tilde{y}\| &= \left\|\left(\gamma+\frac{\alpha t}{g(t)}\right)\tilde{x}_{3}+\left( \gamma-\frac{\alpha t}{g(t)}\right)\tilde{x}_{2}-\frac{\alpha t}{g(t)}\tilde{x}_{1} \right\| \\
		&\geq \left(\gamma+\frac{\alpha t}{g(t)}\right)\tilde{f}_{3}(\tilde{x}_{3})-\left|\gamma-\frac{\alpha t}{g(t)}\right|\tilde{f}_{3}(\tilde{x}_{2})-\frac{ \alpha t}{g(t)}\tilde{f}_{3}(\tilde{x}_{1}) \\
		&= \gamma+\frac{\alpha t}{g(t)}.
	\end{aligned}
	\]
	\textbf{Case 3:} If $\|\tilde{x}_{1}+\tilde{x}_{2}\|\geq g(t)$ and $\|\tilde{x}_{3}+\tilde{x}_{2}-\tilde{x}_{1}\|\geq g(t)$. Let $\tilde{x}=\tilde{x}_{3}-\tilde{x}_{1}$ and $\tilde{y}=\tilde{x}_{2}$. Then $\tilde{x},\tilde{y}\in B_{\tilde{X}}$, we have
	\[
	\begin{aligned}
		\|\beta \tilde{x} + \gamma t \tilde{y}\| &= \|\beta (\tilde{x_3} - \tilde{x_1}) + \gamma t \tilde{x_2}\| \\&= \|\beta (\tilde{x_3} + \tilde{x_2} - \tilde{x_1}) + (\gamma t - \alpha \beta) \tilde{x_2}\| \\
		& \geq \beta (g(t) - 1) + \gamma t \\
	\end{aligned}
	\]
	and
	\[
	\begin{aligned}
		\|\gamma \tilde{x} - \alpha t \tilde{y}\| &= \|\gamma (\tilde{x_3} - \tilde{x_1}) - \alpha t \tilde{x_2}\| \\&= \|\gamma (\tilde{x_3} - (\tilde{x_1} + \tilde{x_2})) + (\gamma - \alpha t) \tilde{x_2}\| \\
		& \geq \gamma (g(t) - 1) + \alpha t.
	\end{aligned}
	\]
	Then, based on the definition of \( C_{\mathrm{NJ}}^p(\alpha, \beta, \gamma, X) \) and the result \( C_{\mathrm{NJ}}^p(\alpha, \beta, \gamma, X) = C_{\mathrm{NJ}}^p(\alpha, \beta, \gamma, \widetilde{X}) \) from Lemma \ref{le2}, we obtain
	\[
	\begin{aligned}
		&C_{\mathrm{N J}}^p(\alpha,\beta,\gamma,X) \\\geq& \max\left\{\frac{\alpha^{\frac{p}{2}} \left\| \beta + \frac{\gamma t}{g(t)} \right\|^p + \beta^{\frac{p}{2}} \left\| \gamma + \frac{\alpha t}{g(t)} \right\|^p}{2^{p - 2} \left[ \alpha^{\frac{p}{2}} (\beta^p + \gamma^p t^p) + \beta^{\frac{p}{2}} (\gamma^p + \alpha^p t^p) \right]}\right.,\\
		&\quad \left. \frac{\alpha^{\frac{p}{2}} \left\| \beta (g(t) - 1) + \gamma t \right\|^p + \beta^{\frac{p}{2}} \left\| \gamma (g(t) - 1) + \alpha t \right\|^p}{2^{p - 2} \left[ \alpha^{\frac{p}{2}} (\beta^p + \gamma^p t^p) + \beta^{\frac{p}{2}} (\gamma^p + \alpha^p t^p) \right]} \right\}\\
		=&\frac{\alpha^{\frac{p}{2}}\gamma^p \left( \beta\sqrt{4\alpha^2 + \gamma^2 t^2} + \beta\gamma t \right)^p + \alpha^p \beta^{\frac{p}{2}} \left( \beta\sqrt{4\alpha^2 + \gamma^2 t^2} - 2\alpha\beta + \beta\gamma t + 2\gamma^2 \right)^p}{2^{2p-2} \alpha^p \gamma^p \left[ \alpha^{\frac{p}{2}} \left( \beta^p + \gamma^p t^p \right) + \beta^{\frac{p}{2}} \left( \gamma^p + \alpha^p t^p \right) \right]}.
	\end{aligned}
	\]
	This is a contradiction. The proof is complete.
\end{proof}

\begin{remark}
	In \cite{101}, Jiaye Bi et al. established the operator-level von Neumann-Jordan constant $\bar{C}_{\mathrm{NJ}}(T),$ which is defined as following:
	\[\bar{C}_{\mathrm{NJ}}(T)=\sup \left\{\frac{\| Tx+ y\|^2+\| Tx- y\|^2}{\left(2\|x\|^2+2\|y\|^2\right)}: x, y \in X,(x, y) \neq(0,0)\right\}.\]
	
	Inspired by this work, we can further extend the generalized von Neumann-Jordan type constant $C_{\mathrm{NJ}}^{p}(\alpha, \beta, \gamma, X)$ established in this paper to the operator level. 
	
	Let $X$ be a Banach space and $T:X \to X$ be a bounded linear operator. For $\alpha,\beta,\gamma >0$ and $p \geq 1$, we define
	\[
	\bar{C}_{\mathrm{NJ}}^p(\alpha, \beta, \gamma, T)=\sup \left\{\frac{\alpha^{\frac{p}{2}}\|\beta Tx+\gamma y\|^p+\beta^{\frac{p}{2}}\|\gamma Tx-\alpha y\|^p}{2^{p-2}\left[(\alpha \beta)^{\frac{p}{2}}+\gamma^p\right]\left(\beta^{\frac{p}{2}}\|x\|^p+\alpha^{\frac{p}{2}}\|y\|^p\right)}: x, y \in X,(x, y) \neq(0,0)\right\} .
	\]

	It is easy to see that when $\alpha=\beta=\gamma$ and $p=2$, $\overline{C}_{\mathrm{NJ}}^{p}(\alpha, \beta, \gamma, T)$ exactly reduces to the operator-level von Neumann-Jordan constant $\bar{C}_{\mathrm{NJ}}(T)$ in \cite{101}. We also establish the  bounds for this operator-type constant:
	\[
	\max\left\{ \frac{1}{2^{p-2}}, \frac{\|T\|^p}{2^{p-2}} \right\} \leq \bar{C}_{\mathrm{NJ}}^p(\alpha, \beta, \gamma, T) \leq 2(1+\|T\|^p).
	\]
\end{remark}
\begin{proof}
	On the one hand, for any $x \in S_X$ and $y=0$, we have 
	\[
	\overline{C}_{\mathrm{NJ}}^p(\alpha, \beta, \gamma, T) \ge \frac{\alpha^{\frac{p}{2}} \|\beta x + 0\|^p + \beta^{\frac{p}{2}} \|\gamma x - 0\|^p}{2^{p-2} \left[ (\alpha\beta)^{\frac{p}{2}} + \gamma^p \right] (\beta^{\frac{p}{2}} \|x\|^p + 0)} = \frac{1}{2^{p-1}}.
	\]
	
	For any $\varepsilon > 0$, there exists $y \in S_X$ such that $\| Ty \| \ge \frac{\| T \|}{1+\varepsilon}$.
	Then, we have
	\[
	\overline{C}_{\mathrm{NJ}}^p(\alpha, \beta, \gamma, T) \ge \frac{\alpha^{\frac{p}{2}} \|0 + \gamma Ty\|^p + \beta^{\frac{p}{2}} \|0 - \alpha Ty\|^p}{2^{p-2} \left[ (\alpha\beta)^{\frac{p}{2}} + \gamma^p \right] \cdot (\alpha^{\frac{p}{2}} + 0)} = \frac{\|Ty\|^p}{2^{p-2}} \ge \frac{1}{2^{p-2}} \left( \frac{\|T\|}{1+\varepsilon} \right)^p.
	\]
	Then, we obtain $\overline{C}_{\mathrm{NJ}}^p(\alpha, \beta, \gamma, T) \ge \frac{1}{2^{p-2}}$.
	
	Thus,
	\[
	\overline{C}_{\mathrm{NJ}}^p(\alpha, \beta, \gamma, T) \ge \max\left\{ \frac{1}{2^{p-2}}, \frac{\|T\|^p}{2^{p-2}} \right\}.
	\]
	
	On the other hand, we have
	\[
	\begin{aligned}
		\| \beta x + \gamma Ty \|^p \le (\beta\|  x \| + \gamma\| T\| \|y \|)^p &\stackrel{\text{Lamma \ref{le1}}}{\le} 2^{p-1} \left[ \beta^p \| x \|^p + \gamma^p \| T\|^p \|y \|^p \right] \\ &\le 2^{p-1} [1+\|T\|^p] \left[ \beta^p \| x \|^p + \gamma^p \| y \|^p \right]
	\end{aligned}
	\]
	and
	\[
	\| \gamma x - \alpha Ty \|^p \le 2^{p-1} [1+\|T\|^p] \left[ \gamma^p \| x \|^p + \alpha^p \| y \|^p \right].
	\]
	Then,we can obtain
	\[
	\begin{aligned}
		& \alpha^{\frac{p}{2}} \| \beta x + \gamma Ty \|^p + \beta^{\frac{p}{2}} \| \gamma x - \alpha Ty \|^p \\
		\le & 2^{p-1} [1+\|T\|^p] \left[ (\alpha \beta)^{\frac{p}{2}} + \gamma^p \right] (\beta^{\frac{p}{2}} \| x \|^p + \alpha^{\frac{p}{2}} \| y \|^p).
	\end{aligned}
	\]
	
	Thus,
	\[
	\overline{C}_{\mathrm{NJ}}^p(\alpha, \beta, \gamma, T) \le \frac{2^{p-1} [1+\|T\|^p] \left[ (\alpha \beta)^{\frac{p}{2}} + \gamma^p \right] (\beta^{\frac{p}{2}} \| x \|^p + \alpha^{\frac{p}{2}} \| y \|^p)}{2^{p-2} \left[ (\alpha \beta)^{\frac{p}{2}} + \gamma^p \right] (\beta^{\frac{p}{2}} \| x \|^p + \alpha^{\frac{p}{2}} \| y \|^p)} \le 2(1+\|T\|^p).
	\]
\end{proof}

For the properties of $\overline{C}_{\mathrm{NJ}}^{p}(\alpha, \beta, \gamma, T)$ on Banach spaces, further research is needed.

Thanks to all the members of the Functional Analysis Research team of the College of Mathematics and Statistics of Anqing Normal University for their discussion and correction of the difficulties and errors encountered in this paper. The authors cordially thank Professor Hai Zhang for his encouraging advice and the first author would like to acknowledge his strong support for undergraduate students in carrying out basic mathematics research.

\bmhead{Author contributions}
All the authors have contributed equally for this work.

\bmhead{Funding}
	This work was supported by Anqing Normal University Innovation and Entrepreneurship Training Program (No. S202610372033).

\bmhead{Data availability}
No data was used for the research described in the article.

\section*{Declarations}
\bmhead{Conflict of interest statement}

The authors declare that they have no known competing financial interests or personal relationships that could have appeared to influence the work reported in this paper.

\bmhead{Ethical approval}
This article does not contain any studies with human participants or animals performed by any of the authors.

\end{document}